\theoremstyle{plain}
\newtheorem{theorem}{Theorem}[section]
\newtheorem{lemma}[theorem]{Lemma}
\newtheorem{proposition}[theorem]{Proposition}
\newtheorem{corollary}[theorem]{Corollary}
\theoremstyle{definition}
\newtheorem{definition}[theorem]{Definition}
\theoremstyle{remark}
\newtheorem{remark}[theorem]{Remark}
\newtheorem*{notation*}{Notation}
\newcommand{\Space}[1]{\ensuremath{\mathfrak{#1}}}
\newcommand{\field}[1]{\ensuremath{\mathbb{#1}}}
\newcommand{\geom}[1]{\ensuremath{\mathbb{#1}}}
\newcommand{\inner}[2]{\left\langle #1,#2 \right\rangle}
\newcommand{\inv}{^{-1}}
\newcommand{\D}{\mathrm{d}}
\newcommand{\id}{\mathrm{id}}
\newcommand{\gen}[1]{{\langle #1 \rangle}}
\newcommand{\R}{\field{R}}
\newcommand{\C}{\field{C}}
\newcommand{\Z}{\field{Z}}
\renewcommand{\phi}{\varphi}
\newcommand{\eps}{\varepsilon}
\newcommand{\Lie}[1]{\mathrm{#1}}
\newcommand{\N}{\mathbb{N}}
\newcommand{\dfn}[1]{\emph{#1}}
\begin{document}
\title{The complete family of Arnoux--Yoccoz surfaces}
\author{Joshua P. Bowman}
%\thanks{Partially supported by NSF VIGRE grant DMS-9983660}
\address{Institute of Mathematical Sciences, 
Stony Brook University, Stony Brook, NY 11794}
\date{\today}

\begin{abstract}
The family of translation surfaces $(X_g,\omega_g)$ constructed 
by Arnoux and Yoccoz from self-similar interval exchange maps 
encompasses one example from each genus $g$ greater than or 
equal to $3$. We triangulate these surfaces and deduce general 
properties they share. The surfaces $(X_g,\omega_g)$ converge 
to a surface $(X_\infty,\omega_\infty)$ of infinite genus and 
finite area. We study the exchange on infinitely many intervals 
that arises from the vertical flow on $(X_\infty,\omega_\infty)$ 
and compute the affine group of $(X_\infty,\omega_\infty)$, 
which has an index $2$ cyclic subgroup generated by a hyperbolic 
element. 
\end{abstract}

\maketitle

\setcounter{tocdepth}{1}
\tableofcontents

\section{Introduction}\label{S:intro}

\subsection{From the golden ratio to the geometric series}

From our calculus courses, we know that the infinite geometric 
series $\frac{1}{2} + \frac{1}{4} + \frac{1}{8} + \cdots$ 
converges to $1$. Indeed, using the summation formula 
$\sum_{k=1}^\infty x^k = x/(1-x)$, we find that $\frac{1}{2}$ is 
the unique solution to the equation $\sum_{k=1}^\infty x^k = 1$. 
From even earlier in our lives, perhaps, we recall that the 
equation $x + x^2 = 1$ has a unique positive solution, whose 
inverse is the golden ratio. The expression $x + x^2$ may be 
viewed as a partial geometric series, which can be extended to $n$ 
terms: $x + \cdots + x^n$.

The positive solutions to the equations $x + \cdots + x^n = 1$ for 
$n \ge 3$ are instrumental in creating a certain family of 
measured foliations on surfaces, which was introduced by 
P.~Arnoux and J.-C.~Yoccoz in 1981 \cite{pAjcY81}. 
In contemporary terminology, these measured foliations are the 
vertical foliations of certain translation surfaces. These 
surfaces were discovered in an attempt to provide examples of 
pseudo-Anosov homeomorphisms, which had been defined only a few 
years previously by W.~Thurston in his classification of surface 
homeomorphisms \cite{wT88}. It was shown some time later (2005) 
by P.~Hubert and E.~Lanneau \cite{pHeL05} that the Arnoux--Yoccoz 
examples do not arise from the Thurston--Veech construction via 
compositions of multi-twists \cite{wT88,wV89}; in particular, 
their affine groups contain no parabolic elements.

In this paper, after providing some background on translation 
surfaces, we will present the surfaces constructed by Arnoux and 
Yoccoz and give explicit triangulations, then use these to prove 
certain properties common to all these surfaces. We will also see 
that the family can be extended to include the cases $n = 2$ and 
$n = \infty$. These extreme cases will turn out to be exceptional 
in their construction---the first corresponds to a singular 
surface (see the Appendix) and the second to a surface of 
infinite type (see \S\ref{S:limit})---but we hope that the 
self-similarity property that the golden ratio and the geometric 
series share with all of the other examples (see \S\ref{S:iettri}) 
will illuminate the entire sequence of surfaces for the reader.

\subsection{Background on translation surfaces}

There are two commonly accepted definitions for a ``translation 
surface'': either a surface with a translation atlas, or a 
Riemann surface with an abelian differential $\omega$. These 
definitions are not quite equivalent. The former endows the 
surface with a Riemannian metric (given in a translation chart 
$z$ by $|\D{z}|^2$) so that it is everywhere locally isometric 
to the Euclidean plane. The latter allows a discrete set of 
points on the surface to have neighborhoods isometric to ``cone 
points'' with respect to the metric $|\omega|^2$; these 
``singularities'' of the metric occur at zeroes of $\omega$ and 
have angles that are integer multiples of $2\pi$. This latter 
convention is necessary, for instance, in order to have compact 
translation surfaces of genus $\ge 2$. Yet it is not hard to move 
from the complex-analytic definition to the Riemannian definition 
by simply ``puncturing'' the surface at the cone points. For 
convenience, then, we adopt the following convention.

\begin{definition}
A {\em translation surface} is a pair $(X,\omega)$, where $X$ 
is a Riemann surface and $\omega \ne 0$ is a holomorphic 1-form 
on $X$. 
\end{definition}

Note that $X$ is not assumed to be compact in the above 
definition.

\begin{definition}
Let $(X,\omega)$ be a translation surface. A homeomorphism 
$\phi : X \to X$ is called {\em affine} if it is affine with 
respect to the canonical charts of $\omega$. The group of affine 
homeomorphisms from $X$ to itself is denoted 
$\Lie{Aff}(X,\omega)$.
\end{definition}

Any affine homeomorphism $\phi$ has a globally well-defined 
derivative $\mathrm{der}\,\phi \in \Lie{GL}_2(\R)$; this is 
essentially because the group of translations is normal in the 
group of all affine bijections of $\R^2$. If $(X,\omega)$ has 
finite area, then necessarily any $\phi \in \Lie{Aff}(X,\omega)$ 
satisfies $\det(\mathrm{der}\,\phi) = \pm 1$; in this case, the 
dynamical type of the map $\phi$ can be easily determined 
\cite{wT88,iK81,rC04}: let $\mathrm{Tr}$ denote the trace 
function.
\begin{itemize}
\item If $|\mathrm{Tr}\,\mathrm{der}\,\phi| < 2$, then $\phi$ 
has finite order.
\item If $|\mathrm{Tr}\,\mathrm{der}\,\phi| = 2$, then 
$(X,\omega)$ decomposes into parallel cylinders such that on 
each cylinder some power of $\phi$ acts as a power of a Dehn 
twist.
\item If $|\mathrm{Tr}\,\mathrm{der}\,\phi| > 2$, then $\phi$ 
is pseudo-Anosov.
\end{itemize}
The importance of the group $\{\mathrm{der}\,\phi \mid \phi \in 
\Lie{Aff}(X,\omega)\}$ for compact $X$ was first observed by 
Veech. We make the following general definition 
\cite{wV89,cEfG97}.

\begin{definition}
Let $(X,\omega)$ be a translation surface. The image of the 
derivative map $\mathrm{der} : \Lie{Aff}(X,\omega) \to 
\Lie{GL}_2(\R)$ is called the {\em Veech group} of $(X,\omega)$ 
and is denoted $\Gamma(X,\omega)$.
\end{definition}

A great deal of general theory about compact translation 
surfaces (and their moduli spaces) has been developed, by too 
many authors to name. Recently, several classes of non-compact 
translation surfaces and their Veech groups have been studied. 
%Hubert--Weiss introduced the ``staircase'' surface in a short 
%preprint, work that has been extended by Troubetskoy, 
%Hooper--Weiss, and Hubert--Schmith\"usen. 
%Hubert--Leli\`evre--Troubetskoy have studied periodic 
%``wind-tree'' models \`a la Ehrenfest. These examples all arise 
%as covers of compact translation surfaces, and so their Veech 
%groups appear as subgroups of Veech groups of compact surfaces. 
%Valdez considered surfaces obtained by unfolding ``irrational'' 
%polygons (planar polygons not all of whose angles are rational 
%multiples of $\pi$); the Veech groups in such cases are 
%indiscrete subgroups of $O(2)$. Prytycki--Schmidh\"usen--Valdez 
%showed that a non-compact translation surface can have almost 
%any (countably generated) subgroup of $\Lie{SL}_2(\R)$ as its 
%Veech group. Hooper showed that Veech's original examples of 
%surfaces from regular polygons converge, after appropriate 
%normalization, to a surface of infinite area composed of two 
%infinite polygons inscribed in parabolas. 
Many of these are surfaces that cover compact surfaces and whose 
Veech groups are therefore contained in Veech groups of compact 
surfaces (e.g., \cite{pHpHbW08,pHsLsT09}). Notable exceptions are 
a surface made from two infinite polygons inscribed in parabolas, 
which can be obtained as a limit of Veech's original examples 
\cite{pH08}, and a family of ``hyperelliptic'' surfaces of finite 
area and infinite genus \cite{rC04}. The surface we present in 
\S\ref{S:limit} combines certain features of these last two 
examples: it is a geometric limit of compact surfaces, and it 
has finite area. Future work on other such limits seems called 
for; at the end of this work, we discuss a possible direction for 
research on Veech groups of geometric limits of translation 
surfaces (Remark~\ref{R:veech}).

\section{From intervals to triangles}\label{S:iettri}

\subsection{Pisot numbers and interval exchange maps}\label{S:iet}

In this section we review the algebraic numbers and interval 
exchange maps involved in the construction of the Arnoux--Yoccoz 
translation surfaces. Given any $g \ge 2$, the polynomial 
\begin{equation}\label{Eq:pisot}
x^g + x^{g-1} + \cdots + x - 1
\end{equation}
has a unique positive root, since its values at $0$ and $1$ are 
$-1$ and $g-1$, respectively, and its derivative is positive for 
all positive $x$. We denote the positive root of \eqref{Eq:pisot} 
simply as $\alpha$, suppressing its dependence on $g$. Arnoux and 
Yoccoz showed that the inverse of $\alpha$ is a Pisot number, 
which means that $\alpha$ is in fact the only root of 
\eqref{Eq:pisot} that lies within the unit disk. Hubert and 
Lanneau remarked that, if $g$ is even, then \eqref{Eq:pisot} has 
one negative root, and if $g$ is odd, then $\alpha$ is the only 
real root. We add to these properties the following:

\begin{lemma}\label{L:1/2}
For each $g \ge 2$, the positive root $\alpha$ of 
\eqref{Eq:pisot} satisfies 
\begin{equation}\label{Eq:expconv}
\frac{1}{2^{g+2}} < \alpha - \frac{1}{2} < \frac{1}{2^{g+1}}.
\end{equation}
\end{lemma}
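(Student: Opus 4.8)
The plan is to analyze the polynomial $p(x) = x^g + x^{g-1} + \cdots + x - 1$ near $x = 1/2$, where the geometric-series intuition from the introduction suggests the root should sit. The key observation is that $\sum_{k=1}^g x^k = \frac{x(1-x^g)}{1-x}$, so that $p(x) = \frac{x - x^{g+1}}{1-x} - 1 = \frac{2x - 1 - x^{g+1}}{1-x}$. Since $\alpha \in (0,1)$ makes the denominator $1 - \alpha$ positive, the equation $p(\alpha) = 0$ is equivalent to the clean relation
\begin{equation*}
2\alpha - 1 = \alpha^{g+1}.
\end{equation*}
This is the workhorse identity: it says precisely that $\alpha - \frac{1}{2} = \frac{1}{2}\alpha^{g+1}$, which already shows $\alpha > 1/2$ and reduces the whole lemma to pinning down the size of $\alpha^{g+1}$.

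First I would establish the two-sided bound $\frac{1}{2} < \alpha < \frac{1}{2} + \frac{1}{2^{g+1}}$ for the root itself, which I expect to follow from evaluating $p$ (or equivalently the function $q(x) = 2x - 1 - x^{g+1}$) at the endpoints and using monotonicity. At $x = 1/2$ we get $q(1/2) = -(1/2)^{g+1} < 0$, and since $p$ is increasing through its root, this confirms $\alpha > 1/2$. For the crude upper bound $\alpha < \frac{1}{2} + \frac{1}{2^{g+1}}$, I would check that $q$ is positive at $x = \frac{1}{2} + \frac{1}{2^{g+1}}$; here $2x - 1 = \frac{1}{2^g}$, and I need $\frac{1}{2^g} > x^{g+1}$, which should hold comfortably because $x$ is only slightly above $1/2$ while the exponent $g+1$ makes $x^{g+1}$ close to $(1/2)^{g+1} = \frac{1}{2}\cdot\frac{1}{2^g}$. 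Some care is needed to verify that the small excess in the base does not spoil the factor-of-two gap, but since $\left(\tfrac{1}{2}+\tfrac{1}{2^{g+1}}\right)^{g+1} < \tfrac{1}{2^g}$ is what is required, a short estimate using $(1 + t)^{g+1} < e^{(g+1)t}$ with $t = 2^{-g}$ should close it.

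With the rough enclosure $\frac{1}{2} < \alpha < \frac{1}{2} + \frac{1}{2^{g+1}}$ in hand, I would feed it back into the exact identity $\alpha - \frac{1}{2} = \frac{1}{2}\alpha^{g+1}$ to extract the sharp bounds \eqref{Eq:expconv}. For the upper bound, substituting $\alpha < \frac{1}{2} + \frac{1}{2^{g+1}}$ gives $\alpha - \frac{1}{2} = \frac{1}{2}\alpha^{g+1} < \frac{1}{2}\bigl(\tfrac12 + \tfrac{1}{2^{g+1}}\bigr)^{g+1}$, and I must argue this is below $\frac{1}{2^{g+1}}$, i.e. that $\alpha^{g+1} < \frac{1}{2^g}$; but this is just $q(\alpha) > $ its value rearranged, and indeed $\alpha^{g+1} = 2\alpha - 1 < 2^{-g}$ follows immediately from the upper enclosure on $\alpha$. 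For the lower bound, substituting $\alpha > \frac{1}{2}$ gives $\alpha - \frac{1}{2} = \frac{1}{2}\alpha^{g+1} > \frac{1}{2}\cdot\frac{1}{2^{g+1}} = \frac{1}{2^{g+2}}$, which is exactly the left inequality.

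The main obstacle I anticipate is the upper-bound step that requires showing $\left(\tfrac12 + \tfrac{1}{2^{g+1}}\right)^{g+1} < \tfrac{1}{2^g}$ (equivalently the positivity of $q$ at the test point), since this is where a naive bound could lose a factor and leave the argument circular. The elegant escape, which I would aim for, is to avoid the binomial expansion entirely by using the identity $\alpha^{g+1} = 2\alpha - 1$ as a \emph{self-improving} substitution: the crude bound $\alpha < 0.6$, say, immediately gives $\alpha^{g+1} < 2^{-g}$ by bounding the base, and one such bootstrapping pass converts a weak enclosure into the factor-of-two-sharp statement without any delicate exponential estimate. In short, the entire lemma hinges on discovering and exploiting the relation $2\alpha - 1 = \alpha^{g+1}$; once that is extracted from the partial-geometric-series form of \eqref{Eq:pisot}, both inequalities in \eqref{Eq:expconv} fall out by elementary substitution.
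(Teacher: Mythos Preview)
Your approach is essentially the paper's: both amount to checking the sign of the polynomial at the two test points $\tfrac12 + 2^{-(g+2)}$ and $\tfrac12 + 2^{-(g+1)}$, and your identity $2\alpha - 1 = \alpha^{g+1}$ is just the geometric-series rewriting the paper uses when it passes to $\frac{1-r^{g+1}}{1-r}$. Your lower bound is exactly the paper's, only phrased more cleanly: $\alpha > \tfrac12$ gives $\alpha^{g+1} > 2^{-(g+1)}$ immediately, matching the paper's trivial reduction to $(1+2^{-(g+1)})^{g+1} > 1$. The ``feed back'' paragraph is redundant: once you have shown $q\bigl(\tfrac12 + 2^{-(g+1)}\bigr) > 0$ you already have the upper bound, and substituting that bound back into $\alpha - \tfrac12 = \tfrac12\,\alpha^{g+1}$ just restates it.

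There is, however, a small gap in your proposed shortcuts for the upper bound. The inequality you must verify is $(1 + 2^{-g})^{g+1} < 2$. Your exponential estimate gives $(1+2^{-g})^{g+1} < e^{(g+1)2^{-g}}$, but $(g+1)2^{-g} = 3/4 > \ln 2$ when $g = 2$, so that route fails there. Your alternative ``bootstrap from $\alpha < 0.6$'' fails in the other direction: $0.6^{g+1} < 2^{-g}$ is equivalent to $1.2^g < 5/3$, which is already false for $g = 3$, so one pass does not suffice. The fix is trivial---check $g = 2$ by hand (indeed $(5/4)^3 = 125/64 < 2$) and use the exponential bound for $g \ge 3$, where $(g+1)2^{-g} \le 1/2 < \ln 2$---but as written neither of your two escapes covers all $g \ge 2$. (The paper, for its part, hides this same inequality behind the word ``similarly''.)
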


%We will make use of this convergence in \S\ref{S:tri} and 
%\S\ref{S:limit}.

\begin{proof}
To obtain the lower bound, we claim that, when 
$r = 1/2+1/2^{g+2}$, the polynomial \eqref{Eq:pisot} evaluated 
at $r$ is negative. This is equivalent to 
%\[
%1 - \frac{1}{2^{g+1}} \left( 1 + \frac{1}{2^{g+1}} \right)^{g+1} 
%< 1 - \frac{1}{2^{g+1}}.
%\]
%This, again, is equivalent to 
\[
\frac{1 - r^{g+1}}{1 - r} < 2, 
\qquad\text{or}\qquad 
\left( 1 + \frac{1}{2^{g+1}} \right)^{g+1} > 1,
\]
which is true for all $g \ge 2$. The upper bound is obtained 
similarly.
%
%To obtain the upper bound, we first rearrange the defining equation for 
%$\alpha$ as $\alpha - 1/2 = 1/2 - \alpha^2 - \cdots - \alpha^g$, and use 
%the lower bound of $1/2$ to get 
%\[
%\alpha - \frac{1}{2} 
%< \frac{1}{2} - \frac{1}{2^2} - \cdots - \frac{1}{2^g} 
%= \frac{1}{2^g}.
%\qedhere
%\]
\end{proof}

\begin{figure}
\centering
\includegraphics[scale=.9]{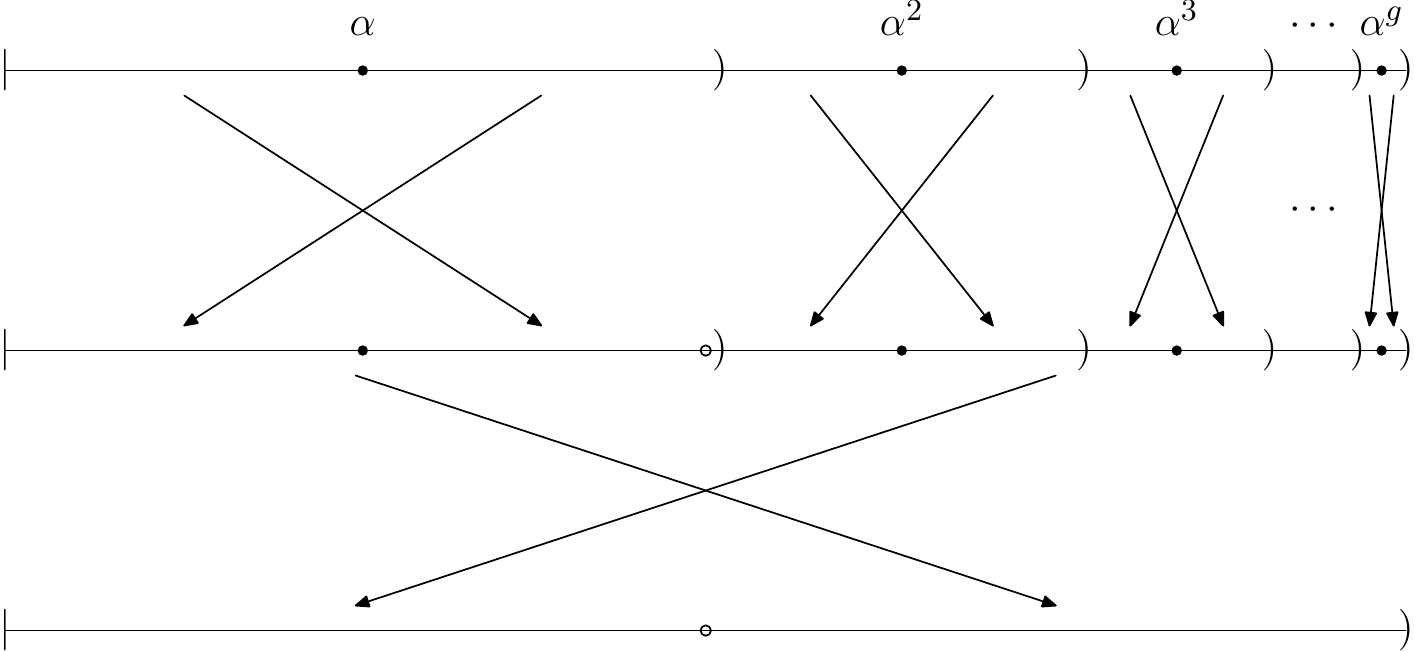}
\caption{The interval exchange $f_g$ as a composition of two 
involutions.}\label{F:ayiet}
\end{figure}

Arnoux and Yoccoz \cite{pAjcY81} introduced an interval exchange 
map based on the geometric properties of $\alpha$. First, the unit 
interval is subdivided into $g$ intervals of lengths $\alpha$, 
$\alpha^2$, \dots, $\alpha^g$. Each of these subintervals is 
divided in half, and the halves are exchanged within each 
subinterval. Finally the entire unit interval is divided into 
half, and these two halves are exchanged. We denote the total 
process $f_g$ (see Figure~\ref{F:ayiet}). We will occasionally be 
interested in the behavior of $f_g$ and its iterates on the 
endpoints of the subintervals, so for specificity we restrict the 
map to $[0,1)$ and assume that the left endpoint of each piece is 
carried along. The key feature of $f_g$ is its self-similarity:

\begin{proposition}[Arnoux--Yoccoz]
Let $\tilde{f_g}$ be the interval exchange map induced on 
$[0,\alpha)$ by the first return map of $f_g$. Then $f_g$ is 
conjugate to $\tilde{f_g}$.
\end{proposition}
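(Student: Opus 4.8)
The plan is to realize the conjugacy by the linear rescaling $h\colon [0,1)\to[0,\alpha)$, $h(x)=\alpha x$, and to prove that $\tilde{f_g}\circ h = h\circ f_g$; equivalently, that the first return map $\tilde{f_g}$, read on $[0,\alpha)$ through the chart $h$, is exactly $f_g$. The reason to expect this particular conjugacy is that the defining subdivision is self-similar: scaling the partition of $[0,1)$ into pieces of length $\alpha,\alpha^2,\dots,\alpha^g$ by the factor $\alpha$ produces the partition of $[0,\alpha)$ into pieces of length $\alpha^2,\dots,\alpha^{g+1}$, reproducing the same pattern of proportions inside $[0,\alpha)$ — this is the combinatorial self-similarity that the identity $\sum_{i=1}^g\alpha^i=1$ encodes. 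I will keep the algebraic consequences of this identity at hand throughout, in particular $\alpha^{g+1}=2\alpha-1$, so that the global midpoint sits at $\tfrac12=\alpha-\tfrac12\alpha^{g+1}$ just to the left of $\alpha$ (compatibly with Lemma~\ref{L:1/2}), and $1-\alpha=\alpha(\alpha+\cdots+\alpha^{g-1})$, since the whole verification should reduce to such relations among powers of $\alpha$ and the constant $\tfrac12$.

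First I would write $f_g=t\circ s$ explicitly as a piecewise translation, where $s$ swaps the two halves inside each interval $I_i$ of length $\alpha^i$ and $t$ is the half-turn $x\mapsto x+\tfrac12 \pmod 1$, recording the subintervals on which $f_g$ is a single translation together with their translation vectors. Then I would compute the first return map to $[0,\alpha)=I_1$ directly, by following the itinerary of each subinterval: some pieces of $I_1$ are carried back into $I_1$ by a single application of $f_g$ (return time $1$), while the remainder is pushed into $[\alpha,1)$ and must be tracked until it re-enters $I_1$. The self-similar geometry should force these excursions to close up after a uniformly bounded number of steps, so that $\tilde{f_g}$ is a genuine interval exchange on $[0,\alpha)$, which I expect to have the same number of intervals as $f_g$.

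With both maps in hand, the final step is the comparison: partition $[0,\alpha)$ by the return-time itinerary and check, piece by piece, that the induced translation on each part equals $\alpha$ times the corresponding translation of $f_g$ pulled through $h$. Each such equality is an identity among the lengths $\tfrac12\alpha^i$, the cut point $\tfrac12$, and the accumulated return offsets, and each collapses using $\sum_{i=1}^g\alpha^i=1$. I expect the main obstacle to be exactly this bookkeeping of the first return map: determining the correct partition of $[0,\alpha)$ into return-time classes together with their itineraries, and in particular controlling how the single ``global'' cut at $\tfrac12$ — which does not a priori respect the $\alpha$-scaled subdivision — meshes with the self-similar pieces. I would try to tame this either by an induction on $g$ (peeling off the top piece $I_g$ and relating the $g$ and $g-1$ pictures) or by organizing the excursions into $[\alpha,1)$ as towers over $[0,\alpha)$, so that the return times and offsets can be read off uniformly rather than through an ad hoc case analysis.
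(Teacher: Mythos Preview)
Your proposed conjugacy $h(x)=\alpha x$ is not the one that works, and this is a genuine gap rather than a bookkeeping issue. A single evaluation already shows the mismatch: for $g=3$ one computes $f_3(0)=(1+\alpha)/2\in I_2$, then $f_3^2(0)=(\alpha-\alpha^2)/2\in[0,\alpha)$, so $\tilde f_3(0)=\alpha(1-\alpha)/2$; on the other hand $h(f_3(0))=\alpha(1+\alpha)/2$. These disagree. What is happening is that your self-similarity heuristic is correct for the local involution $s$ (the half-swaps inside each $I_i$ do scale perfectly), but the global half-turn $t$ does not: the midpoint $1/2$ does not sit at the ``$\alpha$-scaled midpoint'' $\alpha/2$, and that is exactly the place where your identity $\alpha^{g+1}=2\alpha-1$ intrudes rather than cancels.

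In fact the linear $h$ lands one involution off: the same computation gives $f_3^{-1}(0)=s(t(0))=s(1/2)=(1-\alpha)/2$, so $\alpha f_3^{-1}(0)=\alpha(1-\alpha)/2=\tilde f_3(0)$, and more generally $h(x)=\alpha x$ conjugates $\tilde f_g$ to $f_g^{-1}$, not to $f_g$. The paper's conjugacy is therefore not linear but the two-piece affine map
\[
h_g(x)=
\begin{cases}
\alpha x+\dfrac{\alpha+\alpha^{g+1}}{2}, & x\in\bigl[0,\tfrac{1-\alpha^g}{2}\bigr),\\[1ex]
\alpha x-\dfrac{\alpha-\alpha^{g+1}}{2}, & x\in\bigl[\tfrac{1-\alpha^g}{2},1\bigr),
\end{cases}
\]
which is exactly $h$ composed with a half-turn of $[0,1)$ (the map $r$ that the paper later shows conjugates $f_g$ to $f_g^{-1}$). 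Your plan becomes correct if you either (i) replace $h$ by $h_g$ from the start, or (ii) keep $h(x)=\alpha x$, prove that it conjugates $\tilde f_g$ to $f_g^{-1}$, and then invoke $r\circ f_g\circ r=f_g^{-1}$. The second route is closer in spirit to what you wrote and to the $g=\infty$ argument in \S\ref{S:ietaff}, where the pure scaling $h'(x)=x/2$ likewise yields $f_\infty^{-1}$ and one composes with $r$ to obtain $h_\infty$.
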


The proof uses an explicit piecewise affine map 
$h_g : [0,1) \to [0,\alpha)$, defined as follows:
\[
h_g(x) = \begin{cases}
\alpha x + \frac{\alpha+\alpha^{g+1}}{2}, 
	& x \in \left[0,\frac{1-\alpha^g}{2}\right) \\
\alpha x - \frac{\alpha-\alpha^{g+1}}{2}, 
	& x \in \left[\frac{1-\alpha^g}{2},1\right)
\end{cases}
\]
which satisfies $f_g = h_g\inv \circ \tilde{f_g} \circ h_g$. In 
\S\ref{S:limit}, we will show similar kinds of results for certain 
exchanges on infinitely many subintervals. 
%In \S\ref{S:symdyn}, we will encode $f_g$ and $h_g$ symbolically.

In their original paper, citing work of G.~Levitt, Arnoux and 
Yoccoz state that, for a given interval exchange map:
\begin{quotation}
{\em \dots\ on peut construire une suspension canonique, et l'on 
sait que toute suspension poss\'edant les m\^emes singularit\'es 
(en type et en nombre) que cette suspension canonique lui est 
hom\'eo\-morphe par un hom\'eo\-morph\-isme pr\'eservant la mesure 
transverse du feuilletage.}
\end{quotation}
(The ``canonical suspension'' is a measured foliation on a 
compact surface together with a closed curve transverse to 
the foliation on which the first return map of the foliation 
induces the given interval exchange map.) 
They then use this result and the self-similarity of $f_g$ to 
demonstrate the existence of a pseudo-Anosov homeomorphism 
$\psi_g$ on a surface of genus $g$ such that the expansion 
constant of $\psi_g$ is $1/\alpha$. In a separate paper 
\cite{pA88}, Arnoux gives an explicit description of the canonical 
suspension of $f_3$ and illustrates $\psi_3$. In 
\S\S\ref{S:step2tri}--\ref{S:tri} we will present the 
generalization of Arnoux's construction to all genera and 
exploit these presentations to make further conclusions about 
the Arnoux--Yoccoz surfaces.

%\section{From steps and slits to triangles}
\subsection{Steps and slits}\label{S:step2tri}

Fix $g \ge 3$. In this section, we will present the genus $g$ 
Arnoux--Yoccoz surface $(X_g,\omega_g)$ by generalizing Arnoux's 
presentation of $(X_3,\omega_3)$. Starting with a unit square, we 
carve out a ``staircase'' in the upper right-hand corner, with the 
widths of the steps, from left to right, given by $\alpha$, 
$\alpha^2$, \dots, $\alpha^g$, and the distances between the 
steps, going down, given by $\alpha^g$, $\alpha^{g-1}$, \dots, 
$\alpha$. We further slit this square along several vertical 
segments $\sigma_1$, $\sigma_2$, \dots, $\sigma_g$. The slits are 
made starting along the bottom edge of the square at points whose 
$x$-coordinates are images by $f_g$ of the left-hand endpoints of 
the intervals $\big[\frac{\alpha-\alpha^i}{1-\alpha},
\frac{\alpha-\alpha^{i+1}}{1-\alpha}\big)$, for $1 \le i \le g$. 
({\sc n.b.}: the lower endpoints of the slits are not 
singularities on the resulting surface, following the rest of 
the construction below.) 

Now we wish to provide appropriate gluings for the surface to have 
an affine self-map. These identifications are as follows: 
\begin{itemize}
\item The tops of the steps are glued to the bottom of the unit 
square according to the interval exchange $f_g$.
\item The vertical edge of the bottommost step, having length 
$\alpha$, is identified with the bottom portion of the leftmost 
vertical edge. 
\item The remaining top portion of the leftmost edge of the 
square, having length $1-\alpha$, is identified with the bottom 
portion to the left of $\sigma_1$.
\item The remaining top portion to the left of $\sigma_1$ is 
identified with the right side of $\sigma_g$.
\item The vertical edge of the step having height $\alpha^i$ 
($2 \le i \le g$) is identified with the bottom portion to the 
right of the segment $\sigma_{i-1}$. 
\item The remaining top portion to the right of each segment 
$\sigma_i$ ($1 \le i \le g-1$) is identified with the left side 
of the segment $\sigma_{i+1}$.
\end{itemize}

\begin{figure}[h]
\centering
\includegraphics{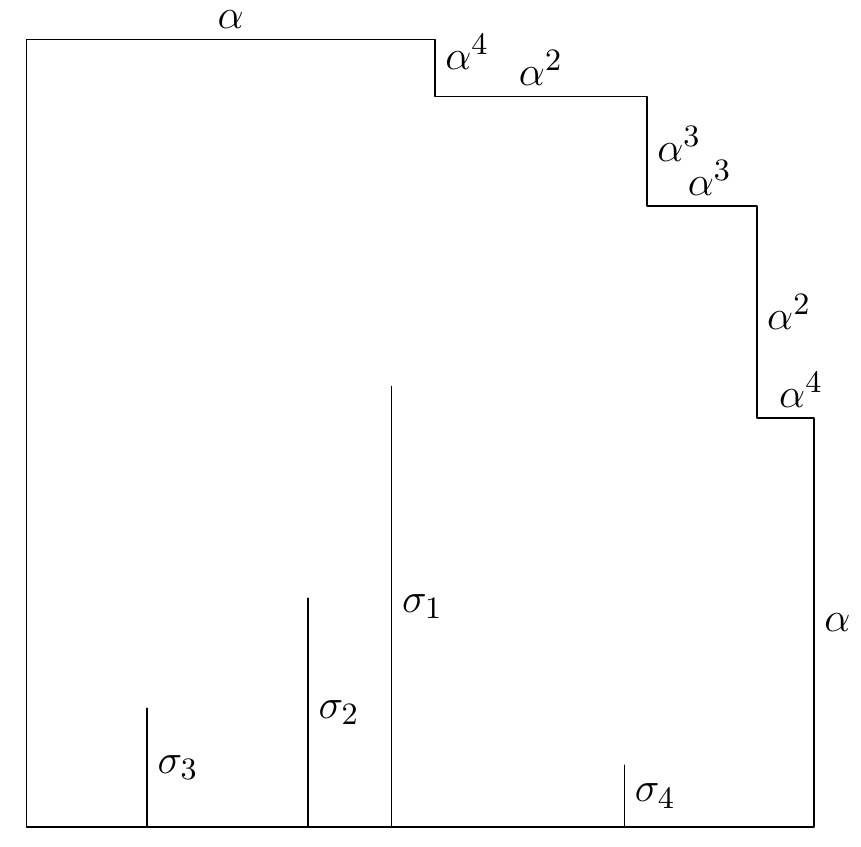}
\caption{The steps and slits for the genus $4$ Arnoux--Yoccoz 
surface.}
\end{figure}

There is a one real-parameter family of surfaces that satisfy the 
gluings given above; the easiest parameter to vary is 
$|\sigma_g|$. We want to single out a value for this parameter so 
that the surface admits a pseudo-Anosov affine map. The required 
condition is described by the equation 
$\alpha (1 + |\sigma_g|) = (1 - \alpha) + |\sigma_g|$,
which says that the length of $\sigma_1$ is $\alpha$ times the sum 
of the length of $\sigma_g$ and the length of the left edge of the 
square (i.e., $1$). Solving this equation, we find 
$|\sigma_g| = (2\alpha - 1)/(1 - \alpha)$, which determines the 
lengths of the remaining slits.

The pseudo-Anosov homeomorphism $\psi_g : X_g \to X_g$ expands the 
horizontal foliation of $\omega_g$ by a factor of $1/\alpha$ and 
contracts the vertical foliation by a factor of $\alpha$. It 
permutes the vertical segments in a predictable manner: for each 
$i$ from $1$ to $g-1$, $\psi_g$ sends $\sigma_i$ to 
$\sigma_{i+1}$, and also sends the union of $\sigma_g$ with the 
left-hand edge of the initial square to $\sigma_1$. The step of 
height $\alpha^i$ is also sent to the step of height 
$\alpha^{i+1}$ ($1 \le i \le g-1$).

\subsection{Triangulation of $(X_g,\omega_g)$}\label{S:tri}

Let $g$ be as in \S\ref{S:step2tri}. Now we give an alternate 
construction of the surface $(X_g,\omega_g)$ from $4g$ triangles. 
Begin with the points $P_0,\dots,P_g, Q_0,\dots,Q_g$ in $\R^2$, 
chosen as follows (see Figure~\ref{F:pts}): 
\begin{gather*}
P_0 = \left(\frac{1-\alpha^g}{2},\frac{\alpha^2}{1-\alpha}\right),
\qquad
Q_0 = \left(-\frac{\alpha^g}{2},\alpha\right), \\
P_1 = \left(
         -\frac{\alpha^{g-1}+\alpha^g}{2},
         \frac{\alpha-\alpha^2+\alpha^3}{1-\alpha}
         \right),
\\
P_g = \left(
		1+\frac{\alpha-\alpha^g}{2},
		\frac{3\alpha-1-\alpha^2}{1-\alpha}
		\right), \\
%\end{gather*}
%\begin{gather*}
P_i = \left(
		\frac{\alpha-\alpha^i}{1-\alpha},
		\frac{\alpha}{1-\alpha}
		\right) 
\qquad\text{for $i = 2, \dots, g-1$}, \\
Q_i = \left(
		\frac{2\alpha-\alpha^i-\alpha^{i+1}}{2(1-\alpha)},
		\frac{\alpha-\alpha^{g-i+2}}{1-\alpha}
		\right)
\qquad\text{for $i = 1, \dots, g$}.
\end{gather*}

For $i = 1, \dots, g$, set $T_i = P_0 Q_i Q_{i-1}$ and 
$T_{g+i} = P_i Q_{i-1} Q_i$. For $i = 1,\dots, 2g$, let $T'_i$ be 
the reflection of $T_i$ in the horizontal axis. Glue the $T_i$s 
along their common boundaries, and likewise for the $T_i'$s. Then 
each remaining ``free'' edge is a translation of another; we glue 
each such pair of edges:
\begin{itemize}
\item $P_0 Q_0$ is paired with $P'_0 Q'_g$, and $P'_0 Q'_0$ is 
paired with $P_0 Q_g$.
\item $P_1 Q_1$ is paired with $P'_g Q'_{g-1}$, and $P'_1 Q'_1$ is 
paired with $P_g Q_{g-1}$.
\item $P_1 Q_0$ is paired with $P_{g-1} Q_{g-1}$, and $P'_1 Q'_0$ 
is paired with $P'_{g-1} Q'_{g-1}$.
\item $P_g Q_g$ is paired with $Q_1 P_2$, and $P'_g Q'_g$ is 
paired with $Q'_1 P'_2$.
\item For $i = 2, \dots, g-2$, $P_i Q_i$ is paired with 
$Q'_i P'_{i+1}$ and $P'_i Q'_i$ is paired with $Q_i P_{i+1}$.
\end{itemize}
%(See Figure~\ref{F:tri}.) 
All of the $P_i$s and $Q'_i$s are identified to become a cone 
point, and likewise for all of the $Q_i$s and $P'_i$s. The 
resulting surface therefore has genus $g$ and lies in the 
stratum $\mathcal{H}(g-1,g-1)$. 
%When $g$ is odd, this stratum is not connected: the spin 
%invariant used by Kontsevich and Zorich in their classification 
%of connected components \cite{mKaZ03} is always odd for 
%$(X_{2n+1},\omega_{2n+1})$.

%An Euler characteristic computation shows that the resulting 
%surface has genus $g$, and because the two cone points are 
%symmetric, they each have a cone angle of $2g\pi$.

\begin{figure}
\centering
\includegraphics{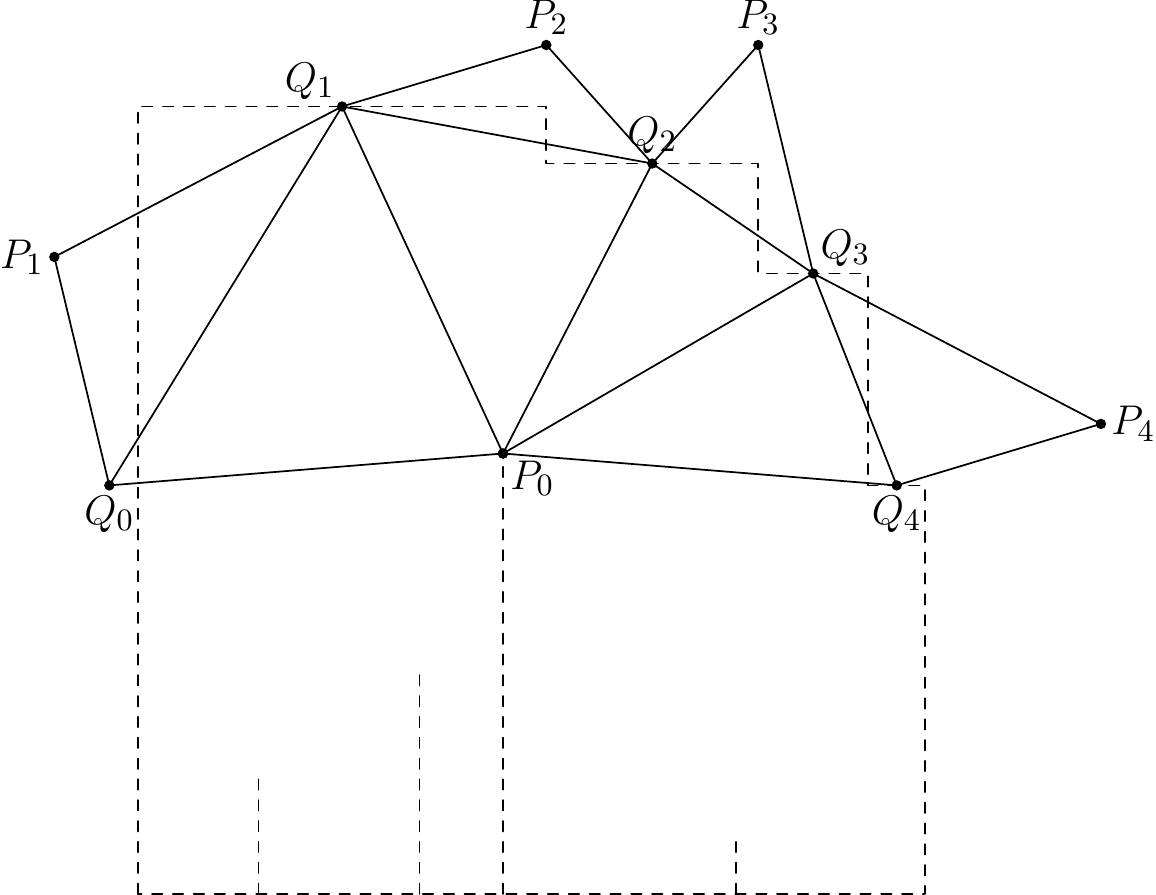}
\caption{The points $P_0,\dots,P_4,Q_0,\dots,Q_4$ relative to 
$(X_4,\omega_4)$'s staircase.}\label{F:pts}
\end{figure}

\begin{figure}[h]
\centering
\includegraphics{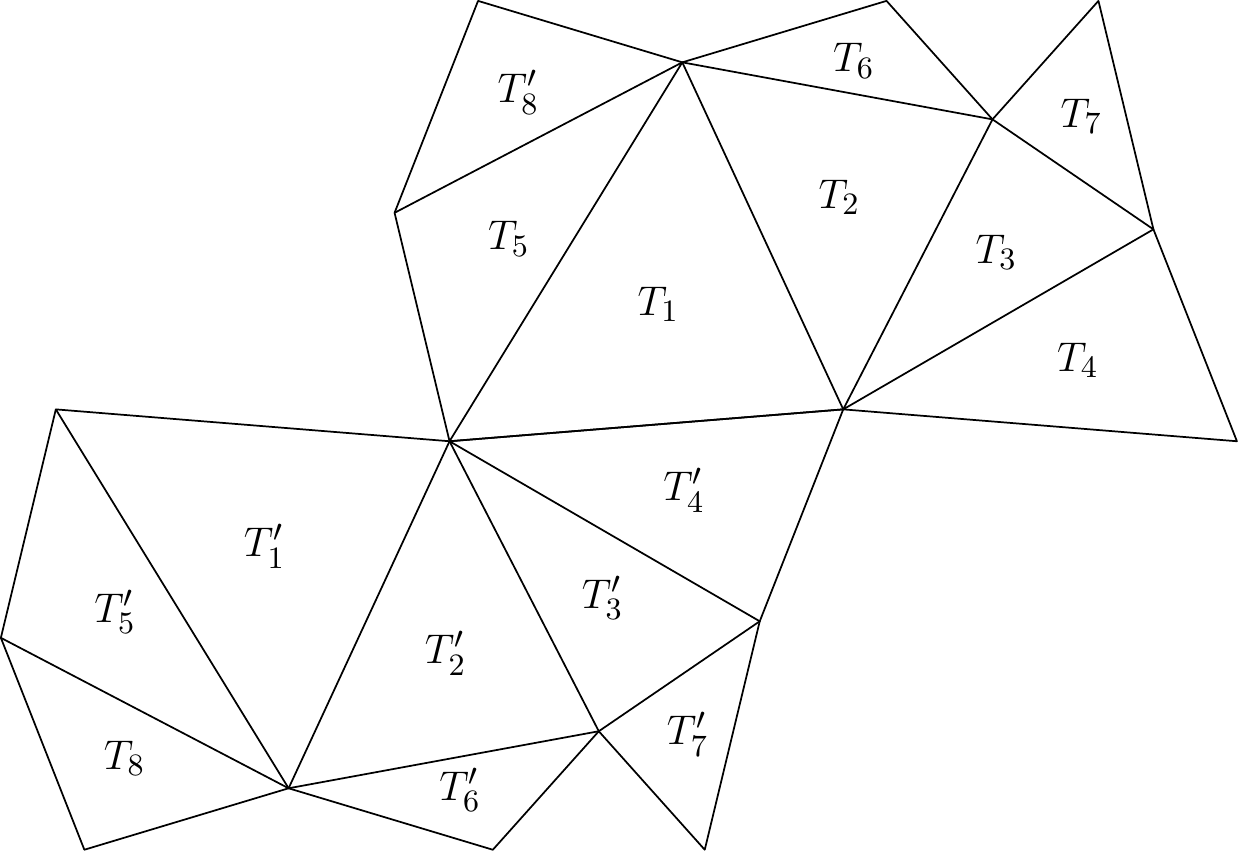}
\caption{A triangulation of $(X_4,\omega_4)$.}
\label{F:tri}
\end{figure}

One can verify the following result directly by checking that the 
surface we have constructed from triangles is isometric to the 
staircase presentation (cf.\ Figures \ref{F:pts} and \ref{F:tri}).

\begin{proposition}
The $T_i$s and $T'_i$s induce a triangulation of $(X_g,\omega_g)$.
\end{proposition}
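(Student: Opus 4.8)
The plan is to prove the Proposition by exhibiting an explicit isomorphism of translation surfaces between the object glued from the $4g$ triangles and the slit staircase of \S\ref{S:step2tri}, built piecewise by translations. Concretely, I would assign to each triangle $T_i$, $T_{g+i}$ (and to the reflections $T_i'$, $T_{g+i}'$) a translation vector carrying it into the staircase region, arranged so that the translated triangles tile that region with pairwise disjoint interiors. Once such a dissection is in hand, the Proposition reduces to three checks: that the assignment is a bijection of the two underlying point sets away from the edges; that the translation structures agree (each piece moves by a pure translation, so $\omega_g$ pulls back to $\mathrm{d}z$ on every triangle); and that the gluing rules imposed on the triangles are exactly the staircase identifications transported through the dissection.

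To control the translation structure I would work directly from the coordinates of the $P_i$ and $Q_i$. The interior edges---those produced by the ``glue along common boundaries'' instruction---are automatic: $T_i = P_0 Q_i Q_{i-1}$ and $T_{g+i} = P_i Q_{i-1} Q_i$ share the segment $Q_{i-1}Q_i$, while consecutive $T_i$ fan out from the common apex $P_0$ along the edges $P_0 Q_{i-1}$, so the upper half assembles into a single polygon (and likewise the reflected lower half). For the ``free'' edges I would compute each edge as a vector in $\R^2$ and verify that the two members of every pairing in the displayed list are equal vectors; this is precisely what makes each gluing a translation and hence compatible with $\omega_g$. The self-similar pattern of the coordinates (the factor $\alpha^i$ running geometrically) lets me treat the generic range $i = 2, \dots, g-2$ uniformly, leaving only finitely many special indices to check by hand.

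With the translation structure confirmed, I would match the glued surface to the staircase, using Figures~\ref{F:pts} and \ref{F:tri} as a guide: the points drawn there show how each $T_i$ sits against the staircase, and the translates lying outside the unit square (those involving $P_1$, $P_g$, $Q_0$) correspond to pieces carried across the staircase identifications. I would verify that the six bulleted gluings of \S\ref{S:step2tri}---tops of steps to the bottom edge via $f_g$, the step edges to the slit bases, and the slit-to-slit pairings---are reproduced, edge for edge, by the free-edge pairings of the triangles. The vertex identifications then follow by tracing edge cycles: one checks that all the $P_i$ and $Q_i'$ close up into a single cone point and all the $Q_i$ and $P_i'$ into another, after which a count of cone angles (or a Gauss--Bonnet and Euler-characteristic computation on the triangulated complex) recovers genus $g$ and the stratum $\mathcal{H}(g-1,g-1)$.

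The main obstacle is not any single deep step but the bookkeeping at the ends of the index range. The exceptional triangles at indices $0$, $1$, $g-1$, $g$ do not obey the uniform formula, and several of their vertices ($P_1$, $P_g$, $Q_0$) lie outside the unit square, so that their staircase translates are glued only after passing through the step and slit identifications; making the correspondence precise for these pieces---and confirming that they fill exactly the corners of the staircase left over by the generic triangles---is the delicate part. I would tame it by exploiting the reflection symmetry $T_i \leftrightarrow T_i'$, which halves the work, and the self-similarity of $f_g$ established above, which forces the geometric scaling of the extreme pieces to be consistent with the rest.
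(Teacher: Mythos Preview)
Your proposal is correct and follows the same approach as the paper: the paper simply asserts that the result ``can be verified directly by checking that the surface we have constructed from triangles is isometric to the staircase presentation,'' referring to Figures~\ref{F:pts} and~\ref{F:tri}, and gives no further argument. What you have written is precisely a blueprint for carrying out that direct verification---translating each triangle into the staircase, matching edge vectors, and reconciling the two sets of gluing rules---so your plan is an elaboration of the paper's one-line remark rather than an alternative route.
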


By ``triangulation'' we mean the structure of a $\Delta$-complex, 
in the sense of Hatcher \cite{aH02}; we also require that the 
set of vertices contain the cone points and the $1$-cells be 
geodesic.

\begin{corollary}
$\Lie{Aff}(X_g,\omega_g)$ contains a fixed-point free, 
orientation-reversing involution $\rho_g$, which commutes with 
$\psi_g$, and whose derivative is reflection in the $x$-axis.
\end{corollary}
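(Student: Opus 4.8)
The plan is to exhibit $\rho_g$ explicitly as the map induced by the reflection in the $x$-axis on the triangulated model, and then check each of its four claimed properties directly against that construction. The natural candidate is the involution that swaps each triangle $T_i$ with its reflection $T'_i$ (and, correspondingly, each $T_{g+i}$ with $T'_{g+i}$). On $\R^2$ this is the affine map $(x,y)\mapsto(x,-y)$, so its derivative is immediately reflection in the $x$-axis, an orientation-reversing element of $\Lie{GL}_2(\R)$ of determinant $-1$. The main thing to verify is that this reflection descends to a well-defined homeomorphism of $X_g$, i.e.\ that it respects the edge identifications used to build the surface. Since $T'_i$ was \emph{defined} as the reflection of $T_i$, the reflection carries the $T_i$-gluings to the $T'_i$-gluings and vice versa; what remains is to confirm that it also interchanges the paired ``free'' edges listed in \S\ref{S:tri} compatibly. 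For instance, the pairing of $P_0 Q_0$ with $P'_0 Q'_g$ is matched under reflection with the pairing of $P'_0 Q'_0$ with $P_0 Q_g$, and one runs down the remaining bullet items analogously; each pairing in the list was written together with its reflected counterpart precisely so that this check goes through.

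Once $\rho_g$ is known to be a well-defined affine homeomorphism, involutivity is clear because reflecting twice is the identity on $\R^2$ and hence on the quotient. For the fixed-point freeness I would argue that the only points of $\R^2$ fixed by $(x,y)\mapsto(x,-y)$ lie on the $x$-axis, and then observe from the coordinates of the $P_i,Q_i$ that no triangle meets the $x$-axis in its interior, so any would-be fixed point on $X_g$ must come from an identification sending a point to its mirror image across the axis. The cone-point structure is the key: since all the $P_i$s and $Q'_i$s form one cone point and all the $Q_i$s and $P'_i$s form the other, reflection swaps these two cone points rather than fixing either, so the cone points are not fixed. For the edges and triangle interiors one checks that no edge lies on the $x$-axis and that the identifications never glue a point to its own reflection (equivalently, that no edge of $T_i$ is paired with its mirror edge of $T'_i$ in a way that would create a fixed locus on the axis).

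It then remains to show $\rho_g$ commutes with $\psi_g$. Here I would use the explicit description of $\psi_g$ from \S\ref{S:step2tri}: it expands horizontally by $1/\alpha$, contracts vertically by $\alpha$, and cyclically permutes the slits $\sigma_i\mapsto\sigma_{i+1}$ (with $\sigma_g$ together with the left edge going to $\sigma_1$), while sending the step of height $\alpha^i$ to the step of height $\alpha^{i+1}$. Its derivative is the diagonal matrix $\mathrm{diag}(1/\alpha,\alpha)$, which commutes with $\mathrm{diag}(1,-1)$ in $\Lie{GL}_2(\R)$, so the derivative of $\rho_g\psi_g\rho_g\inv$ equals $\mathrm{der}\,\psi_g$. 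Because an affine homeomorphism is determined by its derivative together with its action on a single point (or on the combinatorics of the triangulation), I would finish by checking that $\rho_g\psi_g\rho_g\inv$ and $\psi_g$ agree on, say, the combinatorial permutation of triangles, which reduces to the compatibility between the reflection symmetry built into the pairing list and the cyclic relabelling $i\mapsto i+1$ induced by $\psi_g$.

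The step I expect to be the main obstacle is the fixed-point-free claim: orientation-reversing affine involutions of a translation surface are precisely the kind of map that tends to fix a separating curve (the fixed locus of a reflection), and ruling out \emph{any} fixed point—on a triangle interior, on an edge, or at a cone point—requires genuinely using the combinatorics of the identifications rather than a soft argument. The other properties (the derivative, involutivity, and commuting with $\psi_g$) reduce to linear algebra in $\Lie{GL}_2(\R)$ together with the fact that the triangulation and slit-pairings were set up symmetrically about the $x$-axis, so they should follow once the bookkeeping in the pairing list is matched against its reflected counterpart.
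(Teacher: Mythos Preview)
Your approach is correct and would work, but it differs from the paper's. The paper does not verify the properties of $\rho_g$ by direct inspection of the triangulation; instead it gives a conceptual reason: the interval exchange $f_g$ is conjugate to its inverse via the ``rotation'' $r(x) = x \pm \tfrac{1}{2}$ of $[0,1)$, and since $(X_g,\omega_g)$ is the canonical suspension of $f_g$, applying complex conjugation to the charts of $\omega_g$ yields a suspension of $f_g^{-1}$, hence of $f_g$ again, which by the uniqueness of suspensions with prescribed singularity data must be translation-equivalent to $(X_g,\omega_g)$. This equivalence is $\rho_g$. The paper leaves the fixed-point-freeness and the commutation with $\psi_g$ essentially implicit in this picture (commutation follows because both $r$ and $h_g$ are built from the same self-similarity, and fixed-point-freeness because $r$ itself has no fixed points on $[0,1)$).

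What each approach buys: the paper's argument explains \emph{why} the symmetry is there---it is inherited from a symmetry of the underlying interval exchange, and the same reasoning would apply to any IET conjugate to its inverse by such a rotation. Your approach, by contrast, is entirely self-contained once the triangulation is in hand and makes the verification of each property (especially fixed-point-freeness and commutation) explicit rather than appealing to the suspension machinery. Your identification of fixed-point-freeness as the delicate step is apt; the cleanest way to finish it in your framework is exactly as you outline: the two cone points are exchanged (since $\{P_i,Q'_i\}$ and $\{Q_i,P'_i\}$ are swapped by reflection), and no free edge in the pairing list is glued to its own mirror image.
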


The existence of this symmetry occurs for a completely general 
reason: $f_g$ is conjugate to its inverse by the following 
``rotation'' of the unit interval: 
\[
r(x) = \begin{cases}
x + \frac{1}{2}, & x \in [0,\frac{1}{2}) \\
x - \frac{1}{2}, & x \in [\frac{1}{2},1)
\end{cases}
\]
By the reasoning invoked in \S\ref{S:iet}, the surface obtained 
from $(X_g,\omega_g)$ by applying complex conjugation to the 
charts of $\omega_g$ (which is a suspension of $f_g\inv$, and 
therefore of $f_g$) is translation equivalent to $(X_g,\omega_g)$ 
itself, which yields the existence of $\rho_g$.

\begin{corollary}
The compact non-orientable surface of Euler characteristic $1 - g$ 
admits a pseudo-Anosov homeomorphism whose invariant foliations 
have one singular point and whose expansion constant has degree 
$g$.
\end{corollary}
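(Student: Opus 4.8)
The plan is to obtain the required map as the quotient of $\psi_g$ by the involution $\rho_g$ from the preceding corollary. First I would set $N_g = X_g/\gen{\rho_g}$. Since $\rho_g$ is a fixed-point-free, orientation-reversing involution of the closed orientable genus-$g$ surface $X_g$, the projection $\pi\colon X_g \to N_g$ is a regular double cover; $N_g$ is non-orientable, for otherwise an orientation of $N_g$ would pull back under the local homeomorphism $\pi$ to a $\rho_g$-invariant orientation of $X_g$, contradicting that $\rho_g$ reverses orientation. Counting with the double cover gives $\chi(N_g) = \tfrac12\chi(X_g) = \tfrac12(2-2g) = 1-g$, as claimed.

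Next, because $\rho_g$ commutes with $\psi_g$, the map $\psi_g$ descends to a homeomorphism $\bar\psi_g\colon N_g \to N_g$ with $\bar\psi_g\circ\pi = \pi\circ\psi_g$. The invariant foliations of $\psi_g$ are the horizontal and vertical foliations of $\omega_g$. As $\mathrm{der}\,\rho_g$ is reflection in the $x$-axis, it preserves the horizontal and the vertical direction as unoriented line fields and preserves their transverse measures $|\D y|$ and $|\D x|$; hence both foliations, with their measures, descend to a transverse measured pair on $N_g$. Since $\psi_g$ expands the horizontal foliation by $1/\alpha$ and contracts the vertical one by $\alpha$, so does $\bar\psi_g$, and therefore $\bar\psi_g$ is pseudo-Anosov with expansion constant $1/\alpha$.

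For the singularity count, recall that the foliations of $\psi_g$ are singular precisely at the two zeros of $\omega_g$ (the surface lies in $\mathcal{H}(g-1,g-1)$): one zero is the common image of the $P_i$ and $Q'_i$, the other of the $Q_i$ and $P'_i$. Because $\rho_g$ acts on the triangulated model by reflection in the horizontal axis, it sends $P_i \mapsto P'_i$ and $Q'_i \mapsto Q_i$, so it interchanges the two zeros. Thus $\pi$ identifies them to a single point of $N_g$, and the descended foliations have exactly one singular point. The expansion constant of $\bar\psi_g$ is again $1/\alpha$, the dilatation being a local quantity preserved by $\pi$.

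It remains to show that $1/\alpha$ has degree $g$ over $\Q$, i.e.\ that \eqref{Eq:pisot} is irreducible. A one-line computation rules out roots of \eqref{Eq:pisot} on the unit circle, so by the Pisot property it has exactly one root ($\alpha$) strictly inside the unit disk and $g-1$ strictly outside. Passing to the reciprocal polynomial $x^g - x^{g-1} - \cdots - x - 1$, of which $1/\alpha$ is a root, we get a monic integer polynomial with $1/\alpha$ its only root outside the unit disk and all other roots strictly inside. In any factorization into monic integer polynomials, the factor not containing $1/\alpha$ would have all its roots strictly inside the disk, so its constant term would be a nonzero integer of modulus $<1$---impossible. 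Hence the polynomial is irreducible and $\deg_\Q(1/\alpha) = \deg_\Q(\alpha) = g$. The one genuinely geometric step, and the main point to get right, is that $\rho_g$ interchanges the two zeros of $\omega_g$ rather than fixing each setwise: this is exactly what collapses the two singularities upstairs to a single singular point on $N_g$. The remaining ingredients are either formal (the descent of $\psi_g$ and of its invariant foliations) or a short number-theoretic verification (irreducibility of \eqref{Eq:pisot}).
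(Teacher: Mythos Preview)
Your argument is correct and is exactly the quotient construction the paper has in mind: the paper offers no explicit proof of this corollary, treating it as an immediate consequence of the preceding corollary on $\rho_g$ and simply remarking that it generalizes the genus-$3$ result of Arnoux--Yoccoz. You have spelled out the details the paper leaves implicit---that $N_g = X_g/\gen{\rho_g}$ is non-orientable of Euler characteristic $1-g$, that $\psi_g$ and the horizontal/vertical measured foliations descend, and that $\rho_g$ swaps the two cone points so the quotient foliations have a single singularity.

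One point worth noting: your final paragraph, establishing that the expansion constant has degree $g$, actually supplies something the paper does not justify at this spot. The paper records earlier that Arnoux and Yoccoz showed $1/\alpha$ is a Pisot number, but it never explicitly argues irreducibility of \eqref{Eq:pisot}. Your Kronecker-style argument (no roots on the unit circle, hence any proper monic integer factor avoiding $1/\alpha$ would have constant term of modulus strictly less than $1$) is a clean way to close that gap.
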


This corollary generalizes a result from \cite{pAjcY81}, in which 
it is shown that $(X_3,\omega_3)$ can also be constructed by 
lifting a measured foliation on $\geom{RP}^2$ first to the 
non-orientable surface of Euler characteristic $-2$ and then 
to genus $3$.

\begin{comment}
Given angles $0 < \theta_1,\theta_2 < \pi$, we have the equivalence 
\[
\theta_1 + \theta_2 \le \pi \iff \cot\theta_1 + \cot\theta_2 \ge 0.
\]
If $v_1,w_1,v_2,w_2$ are vectors such that $\angle(v_1,w_1) = 
\theta_1$ and $\angle(v_2,w_2) = \theta_2$, then this condition 
can also be expressed as 
\[
\inner{v_1}{w_1} |v_2\ w_2| + |v_1\ w_1| \inner{v_2}{w_2} \ge 0.
\]
Therefore, in order to find the Delaunay triangulation of 
$(X_g,\omega_g)$, we will first compute the inner products and 
determinants of the relevant pairs of vectors. (By ``determinant 
of vectors'', we mean the determinant of the $2\times2$ matrix 
whose columns are the vectors, i.e., the standard $2$-form on 
$\R^2$.) See Appendix C.
\end{comment}

\begin{corollary}\label{C:nothyp}
If $g \ge 4$, then $X_g$ is not hyperelliptic.
\end{corollary}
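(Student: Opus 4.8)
The plan is to reduce hyperellipticity to a statement about the affine group and then to obstruct that statement using the explicit geometry. Suppose $X_g$ is hyperelliptic and let $\iota$ be its hyperelliptic involution. Since $\iota$ acts as $-1$ on the whole space $H^0(X_g,\Omega^1)$ of holomorphic $1$-forms, it satisfies $\iota^*\omega_g = -\omega_g$; hence $\iota$ is an affine automorphism with $\mathrm{der}\,\iota = -I$, so that $-I \in \Gamma(X_g,\omega_g)$. Moreover $\iota$ is central in the full automorphism group, acts as $-\mathrm{id}$ on $H_1(X_g;\Z)$, has exactly $2g+2$ fixed points, and permutes the two cone points of $(X_g,\omega_g)$. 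It therefore suffices to prove that for $g \ge 4$ no element of $\Lie{Aff}(X_g,\omega_g)$ has derivative $-I$.

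First I would record that the corollary in \S\ref{S:tri} already supplies an affine involution $\rho_g$ with $\mathrm{der}\,\rho_g = \mathrm{diag}(1,-1)$, so $-I$ lies in the derivative image exactly when the reflection $\mathrm{diag}(-1,1)$ does; equivalently, $(X_g,\omega_g)$ would have to admit an affine automorphism reflecting the vertical direction. The cleanest route is then to pin down $\Lie{Aff}(X_g,\omega_g)$ from the triangulation: the pseudo-Anosov $\psi_g$ contributes $\mathrm{der}\,\psi_g = \mathrm{diag}(\alpha^{-1},\alpha)$, and I would argue that every affine automorphism is, up to a translation automorphism, a power of $\psi_g$ times a power of $\rho_g$, so that the derivative image consists of the matrices $\mathrm{diag}(\alpha^{-n},(-1)^{\epsilon}\alpha^{n})$. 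Since $\alpha>0$ is real, none of these equals $-I$, and non-hyperellipticity follows.

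A more robust alternative is a Riemann--Hurwitz count that produces the bound $g \ge 4$ directly. Any affine $\iota$ with $\mathrm{der}\,\iota=-I$ is a rotation by $\pi$ of the flat structure, so its fixed points are isolated and confined to the finitely many centers of symmetry visible in the triangulation (centers of triangles, midpoints of paired edges, and possibly the cone points). Reading these off from the explicit vertices $P_i,Q_i$ and the gluing rules, I would enumerate $\mathrm{Fix}(\iota)$ and compare with the value $2g+2$ forced by $X_g/\iota \iso \CP$; the discrepancy should exclude quotient genus $0$ precisely when $g \ge 4$, while allowing it at $g=3$.

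The main obstacle is twofold: controlling the affine group tightly enough to exclude the derivative $-I$ (equivalently, carrying out the fixed-point enumeration on the triangulated surface without error), and explaining why the bound is sharp---that is, why the genus-$3$ surface is genuinely exceptional and does carry such an involution. I expect the self-similarity under $\psi_g$, together with the cyclic way it permutes the steps and slits, to do most of the work in the first task, while the cone-point bookkeeping (tracking which of the $P_i$ and $Q_i$ are identified, and whether $\iota$ fixes or swaps the two cone points) will be the delicate part of the second.
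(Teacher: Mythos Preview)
Your reduction to excluding derivative $-I$ from $\Gamma(X_g,\omega_g)$ is exactly the paper's first step. The difficulty is in how you propose to carry it out. Your route (a), pinning down the full affine group and reading off that $-I$ is absent, is not available: the paper does \emph{not} compute $\Lie{Aff}(X_g,\omega_g)$ for finite $g$, and Remark~\ref{R:veech} makes clear that whether $\gen{\psi_g,\rho_g}$ has finite index is an open question. So ``arguing that every affine automorphism is, up to a translation, a power of $\psi_g$ times a power of $\rho_g$'' is assuming something much harder than the corollary itself. Your route (b) has a subtler flaw: a hypothetical involution with derivative $-I$ need not respect the given triangulation, so its fixed points are not \emph{a priori} confined to ``centers of symmetry visible in the triangulation.'' Without first knowing where $\iota$ sends each triangle, you cannot enumerate $\mathrm{Fix}(\iota)$ this way.

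The paper's argument is far more targeted and avoids both problems. Rather than controlling the whole group, it exhibits a single saddle connection---the edge $P_{g-1}Q_{g-1}$---and checks that (i) no other saddle connection on $(X_g,\omega_g)$ has the same holonomy vector, and (ii) it cannot be sent to itself (rotated about its midpoint) because the triangle $T_{2g-1}$ on one side has no matching image across the identified edge $P_1Q_0$. Any affine map with derivative $-I$ would have to send this segment to a parallel segment of equal length, so this one obstruction suffices. The bound $g\ge4$ enters because in genus $3$ there \emph{is} a second saddle connection, namely $P_0Q_1$, with the same holonomy as $P_2Q_2$, and the hyperelliptic involution swaps the pairs $T_i\leftrightarrow T_{g+i}$ about midpoints of their shared edges.
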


\begin{proof}
Every abelian differential on a hyperelliptic surface is odd with 
respect to the hyperelliptic involution. If, for some $g \ge 4$, 
$X_g$ were hyperelliptic, then there would have to be an isometry 
of $(X_g,\omega_g)$ with derivative $-\id$. Such an isometry 
would, for instance, have to send the segment $P_{g-1} Q_{g-1}$ 
to a parallel segment of the same length. This segment cannot be 
preserved by the isometry, because it would have to be rotated 
around its midpoint---but $Q_{g-2}$ (which is opposite 
$P_{g-1} Q_{g-1}$ in the triangle $T_{2g-1}$) has no potential 
image on the other side of $P_1 Q_0$ (which is identified with 
$P_{g-1} Q_{g-1}$). It is easily checked that no other saddle 
connections on $(X_g,\omega_g)$ are parallel to $P_{g-1} Q_{g-1}$ 
and have the same length. Hence no isometry with derivative 
$-\id$ exists.
\end{proof}

\begin{remark}
The surface $X_3$ is well-known to be hyperelliptic. (In 
\cite{jpb10}, Weierstrass equations are given for two 
surfaces affinely equivalent to $(X_3,\omega_3)$; see also 
\cite{pHeLmM09}.) The obstruction described in the proof of 
Corollary~\ref{C:nothyp} does not occur in genus $3$, because 
the segment $P_2 Q_2$ does in fact have another saddle 
connection with the same length and direction, namely, $P_0 Q_1$. 
The hyperelliptic involution of $X_3$ exchanges each $T_i$ and 
$T_{g+i}$ by rotating around the midpoint of their common edge.
\end{remark}

\section{A limit surface: $(X_\infty,\omega_\infty)$}
\label{S:limit}

Lemma~\ref{L:1/2} implies that each triangle that appears in the 
construction of some $(X_g,\omega_g)$ has a ``limiting position''; 
from these we can construct a ``limit surface'' of infinite genus. 
See Figure~\ref{F:infty} for the definition of this surface. 
To be precise, we obtain a non-compact translation surface 
$(X_\infty,\omega_\infty)$, where $X_\infty$ has infinite genus, 
whose metric completion is the one-point compactification of 
$X_\infty$. Here, as usual, $\omega_\infty$ is the $1$-form 
induced on the quotient by $\D{z}$ in the plane. In a sense, 
the two cone points of the $(X_g,\omega_g)$, $g < \infty$, have 
``collapsed'' into each other, leaving an essential singularity 
at which all of the ``curvature'' of the space 
$(X_\infty,\omega_\infty)$ is concentrated. We shall briefly 
address in \S\ref{S:ietaff} the behavior of 
$(X_\infty,\omega_\infty)$ near this singular point. A 
\dfn{critical trajectory} of $(X_\infty,\omega_\infty)$ is a 
geodesic trajectory that leaves every compact subset of 
$X_\infty$. A \dfn{saddle connection} of 
$(X_\infty,\omega_\infty)$ is a geodesic trajectory (of finite 
length) that leaves every compact subset of $X_\infty$ in both 
directions.

\begin{figure}[h]
\centering
\includegraphics[scale=.9]{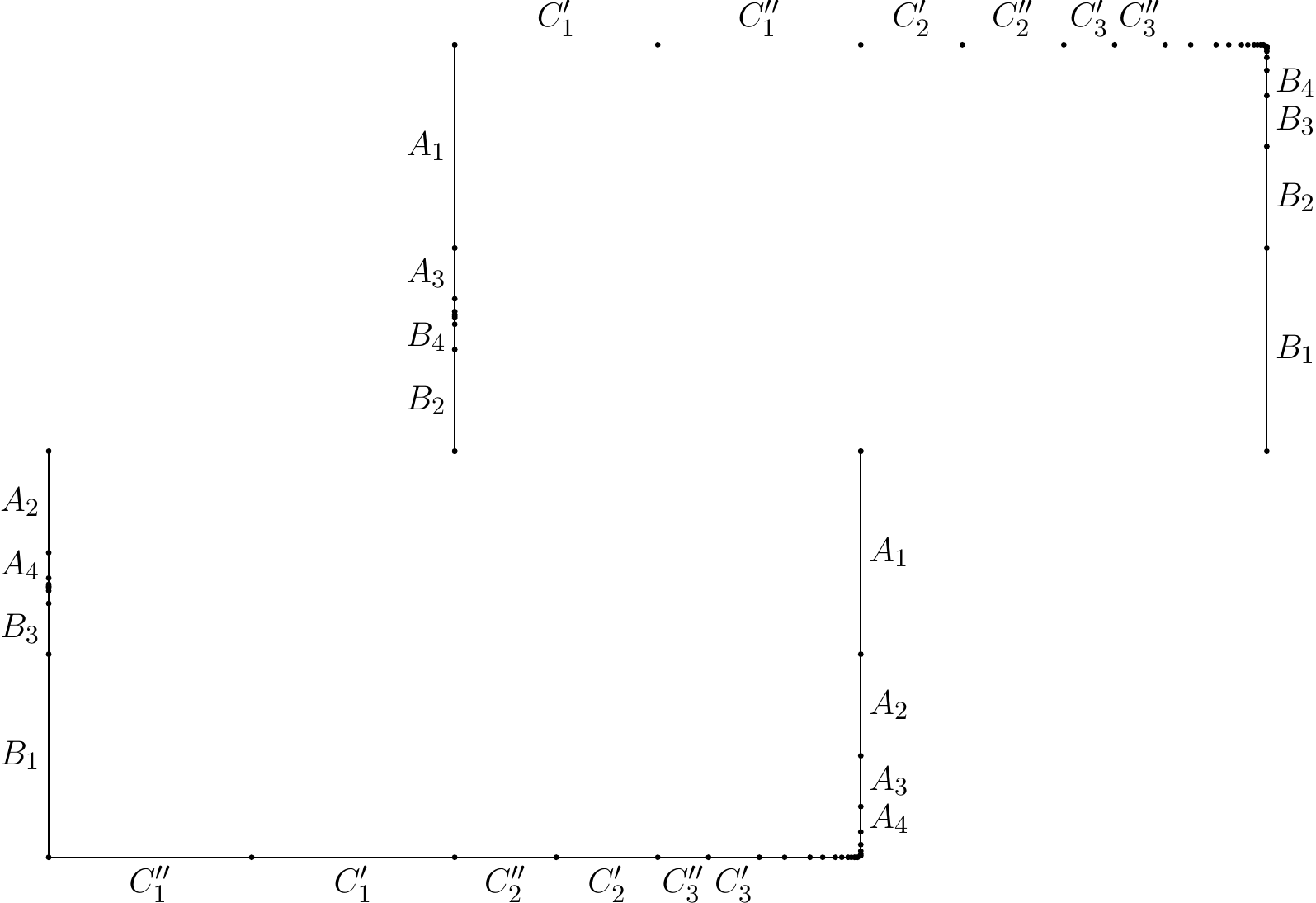}
\caption[The surface $(X_\infty,\omega_\infty)$.]%
{The surface $(X_\infty,\omega_\infty)$. Each pair of 
edges with the same label is identified by translation, as 
is the remaining pair of unlabeled edges. The length of each 
$A_n$, $B_n$, $C'_n$, or $C''_n$ is $1/2^{n+1}$.}\label{F:infty}
\end{figure}

\begin{theorem}\label{T:infty}
$X_\infty$ is a Riemann surface of infinite genus with one end, 
and $\omega_\infty$ is an abelian differential of finite area on 
$X_\infty$ without zeroes on $X_\infty$. 
$\Lie{Aff}(X_\infty,\omega_\infty)$ includes an 
orientation-reversing isometric involution $\rho_\infty$ without 
fixed points on $X_\infty$ and a pseudo-Anosov homeomorphism 
$\psi_\infty$ with expansion constant $2$. These two elements 
commute.
\end{theorem}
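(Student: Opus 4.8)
The plan is to transport the structure of the finite-genus surfaces $(X_g,\omega_g)$ to the limit, exploiting that $\alpha \to \tfrac12$ as $g \to \infty$ (Lemma~\ref{L:1/2}), and to argue throughout from the explicit polygonal model of Figure~\ref{F:infty}, whose edges have lengths $1/2^{n+1}$. The two analytic assertions are the most direct. Since $\omega_\infty$ is induced by $\D z$ in the plane, it equals $\D z$ in a chart about every point of $X_\infty$ and therefore has no zeroes on $X_\infty$; the only candidate singularity is the point adjoined in the metric completion, which by definition is not a point of $X_\infty$. Finiteness of area is immediate because $(X_\infty,\omega_\infty)$ is assembled from triangles whose areas scale like $1/4^{\,n}$ and hence sum to a convergent geometric series (equivalently, the defining polygon is bounded).

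For the topological claims I would argue as follows. To see that the genus is infinite, I would exhibit infinitely many pairwise homologically independent handles directly in the polygon---each level $n$ of the infinite staircase contributes one, exactly as finitely many of them produce the genus of $(X_g,\omega_g)$---so no finite bound on the genus can hold. For the single end, I would show that the deleted metric neighborhoods of the adjoined point form a cofinal family of \emph{connected} sets, so that the complement of every compact subset of $X_\infty$ has exactly one unbounded component; this is precisely the assertion, recorded before the theorem, that the metric completion is the one-point compactification.

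Next I would construct the two affine maps. For $\rho_\infty$ I would reuse the mechanism of the finite case (the corollary following the triangulation proposition): the limiting interval exchange $f_\infty$ is conjugate to $f_\infty\inv$ by the rotation $r$, so conjugating the charts of $\omega_\infty$ produces a suspension of $f_\infty\inv$, hence of $f_\infty$, which is translation equivalent to $(X_\infty,\omega_\infty)$; the resulting involution has derivative reflection in the $x$-axis, and one checks it is fixed-point free on $X_\infty$. For $\psi_\infty$ I would pass to the limit of the self-similarity: the maps $h_g$ converge to the piecewise-affine $h_\infty : [0,1) \to [0,\tfrac12)$ conjugating $f_\infty$ to its first return on $[0,\tfrac12)$, and the corresponding suspension automorphism has derivative $\mathrm{diag}(2,\tfrac12)$, with expansion constant $\lim_{g\to\infty} 1/\alpha = 2$; on the polygon it shifts the staircase of steps and slits up one level while rescaling. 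Its derivative is hyperbolic with horizontal and vertical eigendirections, and since $(X_\infty,\omega_\infty)$ has finite area, the trichotomy recalled in \S\ref{S:intro} ($|\mathrm{Tr}\,\mathrm{diag}(2,\tfrac12)| = \tfrac52 > 2$) identifies $\psi_\infty$ as pseudo-Anosov, preserving the transverse measured horizontal and vertical foliations.

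Finally, for commutativity, the derivatives $\mathrm{diag}(2,\tfrac12)$ and reflection in the $x$-axis are diagonal and hence commute, so $\rho_\infty \psi_\infty \rho_\infty\inv$ and $\psi_\infty$ have the same derivative; it then remains to check that the conjugation symmetry and the self-similarity are compatible at the level of $f_\infty$, as they are for finite $g$, which forces the two homeomorphisms to coincide. I expect the genuine difficulty to be not algebraic but topological, and to be concentrated entirely at the essential singularity: verifying that $\psi_\infty$ is a bona fide homeomorphism across the infinitely many shrinking pieces that accumulate there, and that the surface really has a single end, requires controlling the infinitely many identifications that wind around every neighborhood of the puncture. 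That analysis is the main obstacle.
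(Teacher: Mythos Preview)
Your handling of the analytic and topological assertions (no zeroes, finite area, infinite genus via an infinite family of handles, one end via a connected cofinal exhaustion) matches the paper's proof, which makes these concrete by exhibiting curves $\gamma'_n,\gamma''_n$ joining midpoints of the $C'_n,C''_n$ edges and an explicit compact exhaustion $\{K_g\}$. Where you diverge is in producing $\rho_\infty$ and $\psi_\infty$. The paper does not argue via interval exchanges or limits at all: it simply reads both maps off the polygon of Figure~\ref{F:infty}. The involution $\rho_\infty$ is the visible glide-reflection in a horizontal axis with translation length $1/2$, swapping the two rectangles and the pairs $A_n\leftrightarrow B_n$, $C'_n\leftrightarrow C''_n$; fixed-point freeness is immediate. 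The pseudo-Anosov $\psi_\infty$ is an explicit cut-and-paste: apply $\mathrm{diag}(2,\tfrac12)$ to the central rectangle $R$ and to $R'=S_1\cup S_2$, then lay $\psi_\infty(R)$ over $S_1$ and the lower half of $R$, and $\psi_\infty(R')$ over $S_2$ and the upper half, and check compatibility with the labelled identifications. Commutation is then a direct verification on the polygon.

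Your indirect route carries a genuine gap. Inferring $\rho_\infty$ from ``a suspension of $f_\infty^{-1}$ is translation equivalent to $(X_\infty,\omega_\infty)$'' invokes Levitt's uniqueness of the canonical suspension, which in the paper is cited only for interval exchanges on \emph{finitely} many intervals; for $f_\infty$ this uniqueness has not been established, and you would have to prove it. Similarly, building $\psi_\infty$ by ``passing to the limit'' of the $\psi_g$ requires a convergence statement for affine maps along the embeddings $\iota_g$, which the paper proves only \emph{after} this theorem and only for the metrics (Theorem~\ref{T:limit}), not for the maps themselves. Finally, your commutation argument (same derivative, hence same map once one checks compatibility at the level of $f_\infty$) tacitly uses that $(X_\infty,\omega_\infty)$ has no nontrivial translation automorphisms, a fact established only later in the proof of Theorem~\ref{T:affinfty}. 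You correctly locate the difficulty at the essential singularity, but the paper's point is that one can sidestep that difficulty entirely: the explicit polygonal constructions make all verifications local and finite on each labelled edge, so nothing delicate happens near the puncture.
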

\begin{proof}
(In this paragraph, we follow the method of proof used by 
R.~Chamanara in \cite{rC04}.) That $X_\infty$ is a Riemann surface 
is evident, as are the claims about $\omega_\infty$. The fact that 
$X_\infty$ has infinite genus can be deduced from the existence of 
a set of pairwise non-homotopic simple closed curves 
$\{\gamma'_n,\gamma''_n\}_{n\in\N}$, where $\gamma'_n$ 
(respectively, $\gamma''_n$) connects the midpoints of the edges 
labelled $C'_n$ (respectively, $C''_n$), and each $\gamma'_n$ 
intersects only $\gamma''_n$ (and vice versa). To show that 
$X_\infty$ has only one topological end, we construct a sequence 
of compact subsurfaces with boundary. Let $K_g$ be the complement 
of the union of the open squares having side length $1/2^{g+1}$ 
and centered at the endpoints of the segments $A_n$, $B_n$, 
$C_n'$, $C_n''$. These $K_g$ satisfy $K_g \subset K_{g+1}$ and 
$\bigcup K_g = X_\infty$, and the complement of each $K_g$ has 
one component. Therefore by definition $X_\infty$ has one 
topological end.

The orientation-reversing affine map $\rho_\infty$ is visible in 
Figure~\ref{F:infty} as a glide-reflection in a horizontal axis 
with translation length $1/2$. It sends the interior of the upper 
rectangle to the interior of the lower rectangle, each edge 
labeled $A_n$ to an edge labeled $B_n$, and each edge labeled 
$C'_n$ to an edge labeled $C''_n$. Therefore it has no fixed 
points.

Now we demonstrate the pseudo-Anosov affine map $\psi_\infty$. Let 
$R$ be the central rectangle in Figure~\ref{F:infty}, and let 
$S_1$ and $S_2$ be the squares in the lower left and upper right, 
respectively. Expand $R$ horizontally by a factor of $2$, and 
contract $R$ vertically by a factor of $1/2$ to obtain 
$\psi_\infty(R)$. Do the same with the rectangle $R'$ which is the 
union of $S_1$ and $S_2$ (the top edge of $S_1$ is glued to the 
bottom edge of $S_2$) to obtain $\psi_\infty(R')$. Take 
$\psi_\infty(R)$ and lay it over $S_1$ and the lower half of $R$, 
and lay $\psi_\infty(R')$ over $S_2$ and the top half of $R$. This 
affine map is compatible with all identifications. That 
$\psi_\infty$ and $\rho_\infty$ commute may be checked directly.
\end{proof}

\begin{remark}
The pseudo-Anosov map $\psi_\infty : X_\infty \to X_\infty$ is a 
variant of the well-studied baker map, and thus 
$(X_\infty,\omega_\infty)$ is an alternate infinite-genus 
realization of this map, which was demonstrated on a 
``hyperelliptic'' infinite-genus surface by 
Chamanara--Gardiner--Lakic \cite{rCfGnL06}. The topological type 
of $X_\infty$ is that of a ``Loch Ness monster'' and is therefore 
related to the surfaces described in \cite{pPgSfV09}, although the 
flat structure of $\omega_\infty$ does not fall into the class of 
surfaces studied there.
\end{remark}

Let us make precise the notion of $(X_\infty,\omega_\infty)$ as 
a ``limit'' of $(X_g,\omega_g)$. We establish canonical 
piecewise-affine embeddings $\iota_g : K_g \to X_g$, where the 
$K_g$ are the subsurfaces defined in the proof of 
Theorem~\ref{T:infty}, in such a way that 
$\iota_g^*\, |\omega_g|$ converges to $|\omega_\infty|$ on compact 
subsets of $X_\infty$ as $g \to \infty$. (Here $|\omega_n|$ 
indicates the metric induced on $X_n$ by $\omega_n$, 
$3 \le n \le \infty$.) In fact, each $\iota_g$ will be defined on 
an open set $U_g$ containing $K_g$ and dense in $X_\infty$.

For each $3 \le g < \infty$, let $U_g$ be the surface obtained 
from Figure~\ref{F:infty} by making all identifications up through 
index $\lfloor g/2 \rfloor$ for the $A_i$s and $B_i$s, and all 
identifications up through index $\lfloor (g-1)/2 \rfloor$ for the 
$C'_i$s and the $C''_i$s. (Here and in what follows 
$x \mapsto \lfloor x \rfloor$ denotes the ``floor'' function.) 
Retract the union of the triangles 
\begin{gather*}
\left\{
	\left(\frac{1}{2},\frac{1}{2}\right),
	\left(1,
	\frac{2^{\lfloor g/2 \rfloor}-1}%
	      {2^{\lfloor g/2 \rfloor}}
	\right),
	(1,1)
\right\} 
\quad\text{and}\quad
\left\{
	\left(\frac{1}{2},\frac{1}{2}\right),(1,1),
	\left(
	\frac{2^{\lfloor (g-1)/2 \rfloor}-1}%
	      {2^{\lfloor (g-1)/2 \rfloor}},
	1\right)
\right\}
\end{gather*}
onto the triangle 
$\{(1/2,1/2),(1,1-1/2^{\lfloor g/2 \rfloor}),
(1-1/2^{\lfloor (g-1)/2 \rfloor},1)\}$ by a homeomorphism, affine 
on each of the original triangles. Now a surface of genus $g$ with 
two punctures can be created directly by identifying the ``free'' 
edge of this triangle with one of the ``free'' segments on the 
leftmost edges of the polygon. ({\sc n.b.}: at this stage, this 
final identification is not by a translation, but it can be 
chosen to be affine.)\smallskip

\begin{figure}[h]
\centering

\includegraphics[scale=.85]{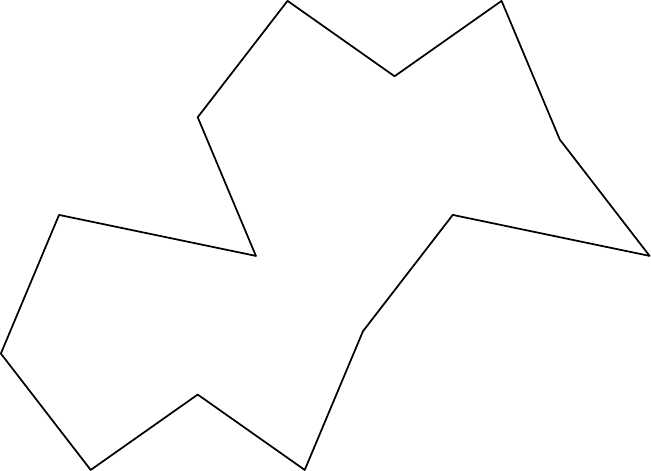} \hspace{1.5cm}
\includegraphics[scale=.85]{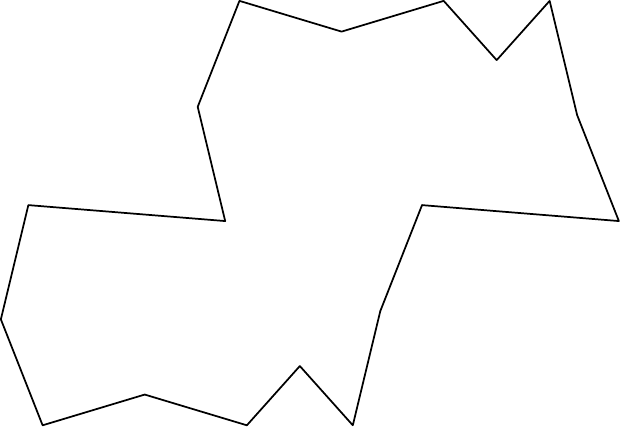} \vspace{1cm}

\includegraphics[scale=.85]{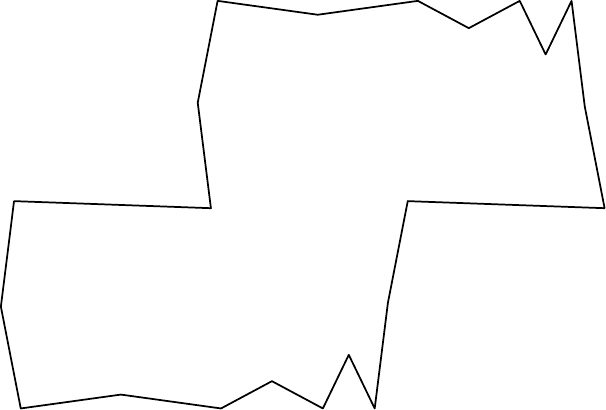} \hspace{1.8cm}
\includegraphics[scale=.85]{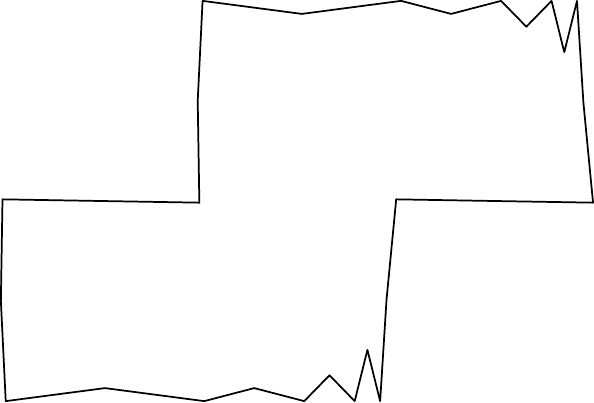} \smallskip

\caption{Outlines of the surfaces $(X_g,\omega_g)$ for 
$g = 3,4,5,6$.}\label{F:seq}
\end{figure}

Figure~\ref{F:seq} shows the outlines of the first few surfaces in 
the sequence $(X_g,\omega_g)$. By adjusting the positions of the 
triangles in the upper right and upper left corners (e.g., 
removing the triangles labelled $T_{2g-\lfloor g/2\rfloor}$ 
through $T_{2g}$, in addition to their mirror images, and regluing 
them along their longest edges in the appropriate location---cf.\ 
Figure~\ref{F:tri} and the description in \S\ref{S:step2tri}), 
one finds that there is a piecewise-affine map $\iota_g$ carrying 
$U_g$ to $X_g$. Moreover, because $U_{g-1} \subset U_g$, $\iota_g$ 
restricts to an embedding of $U_{g-1}$, as well.

\begin{theorem}\label{T:limit}
The metrics $\iota_g^*\,|\omega_g|$ converge to $|\omega_\infty|$ 
uniformly on compact subsets of $X_\infty$.
\end{theorem}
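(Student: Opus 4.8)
The plan is to reduce the assertion to a finite, per-triangle statement and then feed in the exponential convergence $\alpha \to \frac12$ furnished by Lemma~\ref{L:1/2}. First I would fix a compact set $K \subset X_\infty$. Since the $K_g$ form an increasing exhaustion of $X_\infty$ by compacta and $K_g \subset U_g$ with the $U_g$ nested, there is a $g_0$ with $K \subset K_{g_0} \subset U_g$ for all $g \ge g_0$, so $\iota_g$ is defined on $K$. A compact subset of $X_\infty$ keeps a definite distance from the essential singularity, where the triangles of the $\Delta$-complex accumulate; hence $K$ meets only finitely many triangles $\tau_1,\dots,\tau_N$, and both $N$ and these triangles depend on $K$ alone, not on $g$.

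Next I would carry out the per-triangle reduction. On each $\tau_j$ the map $\iota_g$ is affine, so there its derivative is a constant $D_g^{(j)} \in \Lie{GL}_2(\R)$, and $\iota_g^*|\omega_g|$ restricted to $\tau_j$ is the Euclidean metric with length element $|D_g^{(j)}\,\D z|$, while $|\omega_\infty|$ has length element $|\D z|$. The identity $(\tau_j,|\omega_\infty|)\to(\tau_j,\iota_g^*|\omega_g|)$ is thus linear with derivative $D_g^{(j)}$, so its bi-Lipschitz distortion is controlled by the singular values of $D_g^{(j)}$. It therefore suffices to prove that $D_g^{(j)} \to \id$ as $g \to \infty$ for each fixed $j$; taking the maximum of $\|D_g^{(j)} - \id\|$ over the finitely many indices $j$ then produces a single bi-Lipschitz constant for the identity $(K,|\omega_\infty|) \to (K,\iota_g^*|\omega_g|)$ tending to $1$, which is exactly the desired uniform convergence on $K$.

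To establish $D_g^{(j)} \to \id$, I would first note that for $g$ large the triangle $\tau_j$ sits in the bulk of the construction: the triangles rearranged in defining $\iota_g$ are $T_{2g-\lfloor g/2\rfloor},\dots,T_{2g}$ and their mirror images, all of index $\gtrsim g/2$, which in $X_\infty$ are tiny triangles clustered at the singularity and hence disjoint from $K$ once $g/2$ exceeds the indices occurring in $\tau_1,\dots,\tau_N$. Consequently, for large $g$, $\iota_g$ carries $\tau_j$ affinely onto the corresponding triangle of $X_g$ by matching vertices. If $\tau_j$ has vertices $a,b,c$ and their images are $a_g,b_g,c_g$, then
\[
D_g^{(j)} = \big[\,b_g - a_g \ \big|\ c_g - a_g\,\big]\,\big[\,b - a \ \big|\ c - a\,\big]^{-1}.
\]
The image vertices are the points $P_i, Q_i$ of bounded index, whose coordinates are rational functions of $\alpha$ together with powers of the form $\alpha^{g-i+\cdots}$; by Lemma~\ref{L:1/2} one has $\alpha \to \frac12$ while each genuinely $g$-dependent power of $\alpha$ tends to $0$, and setting $\alpha = \frac12$ in these formulas returns precisely the limiting vertex positions that define the triangulation of $X_\infty$. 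Hence $a_g \to a$, $b_g \to b$, $c_g \to c$, the two bracketed matrices converge, and $D_g^{(j)} \to \id$.

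The main obstacle I anticipate is not analytic but organizational: one must pin down, for each fixed $\tau_j$, exactly which triangle of $X_g$ receives it under $\iota_g$ and verify that this correspondence stabilizes at bounded index as $g \to \infty$, so that the rearrangement of the high-index triangles never touches the triangles meeting $K$. Once that correspondence is fixed, the convergence $D_g^{(j)} \to \id$ is an immediate consequence of the explicit vertex formulas and Lemma~\ref{L:1/2}, and the passage from the finitely many pointwise limits to uniform convergence of the metrics on $K$ is automatic.
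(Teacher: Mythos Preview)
Your proposal is correct and follows essentially the same approach as the paper: both arguments observe that $\iota_g$ is piecewise affine on a neighborhood of $K$, so convergence of metrics reduces to the derivatives (equivalently, the quasi-conformal dilatations and Jacobians) tending to the identity, which in turn follows from $\alpha \to \tfrac12$ via Lemma~\ref{L:1/2}. Your version is considerably more explicit than the paper's brief sketch---in particular, your observation that the rearranged triangles $T_{2g-\lfloor g/2\rfloor},\dots,T_{2g}$ eventually miss $K$, and your matrix formula for $D_g^{(j)}$ in terms of vertex positions, make precise exactly the points the paper leaves to the reader.
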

\begin{proof}
Any compact $K \subset X_\infty$ is contained in some $U_g$. For 
any pair of points $P',P'' \in K$, the ratio of the distance from 
$P'$ to $P''$ in each of the metrics $\iota_g^* |\omega_g|$ and 
$|\omega_\infty|$ is bounded by the quasi-conformal constants and 
the Jacobian determinants of the maps $\iota_g$, which are 
uniformly bounded over all of $K$. As these constants approach 
$1$, so do the ratios of lengths over $K$, uniformly.
\end{proof}

\section{The affine group of $(X_\infty,\omega_\infty)$}%
\label{S:ietaff}

In this section we will explore some of the geometry and dynamics 
of $(X_\infty,\omega_\infty)$, culminating in a proof of the 
following:

\begin{theorem}\label{T:affinfty}
$\Lie{Aff}(X_\infty,\omega_\infty) \cong \Z \times \Z/2\Z$ is 
generated by $\psi_\infty$ and $\rho_\infty$.
\end{theorem}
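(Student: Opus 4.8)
The plan is to show that the known elements $\psi_\infty$ and $\rho_\infty$ already generate the full affine group, and that the group they generate is exactly $\Z \times \Z/2\Z$. First I would establish the abstract structure of the subgroup $\langle \psi_\infty, \rho_\infty \rangle$: since $\rho_\infty$ is an orientation-reversing isometric involution (so $\rho_\infty^2 = \id$), since $\psi_\infty$ has infinite order (its derivative has eigenvalue $2 \neq \pm 1$, so no power can be the identity), and since the two commute by Theorem~\ref{T:infty}, the subgroup they generate is a quotient of $\Z \times \Z/2\Z$. To see it is not a proper quotient, I note that $\psi_\infty$ is orientation-preserving while $\rho_\infty$ is orientation-reversing, so $\rho_\infty \notin \langle \psi_\infty \rangle$; hence the subgroup is genuinely $\Z \times \Z/2\Z$ with the $\Z$ factor generated by $\psi_\infty$ and the $\Z/2\Z$ factor by $\rho_\infty$.

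The real content is the reverse inclusion: every affine homeomorphism of $(X_\infty,\omega_\infty)$ lies in this subgroup. The natural strategy is to factor through the derivative map. Any $\phi \in \Lie{Aff}(X_\infty,\omega_\infty)$ has a well-defined derivative $\mathrm{der}\,\phi \in \Lie{GL}_2(\R)$, and $\phi$ preserves the finite-area form $|\omega_\infty|$, so $\det(\mathrm{der}\,\phi) = \pm 1$. The first main step is to pin down the image of the derivative map, i.e. the Veech group $\Gamma(X_\infty,\omega_\infty)$. Because $\phi$ must be an isometry of the singular flat metric and must permute its geometric invariants, I would argue that $\mathrm{der}\,\phi$ must preserve the set of directions of saddle connections and the associated length data. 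The self-similar, dyadic structure of $(X_\infty,\omega_\infty)$ strongly constrains these: the horizontal and vertical foliations are distinguished (e.g. by the essential singularity and by the collapsing geometry near it), so $\mathrm{der}\,\phi$ must preserve the horizontal and vertical directions up to the reflection realized by $\mathrm{der}\,\rho_\infty$. This forces $\mathrm{der}\,\phi$ to be diagonal (possibly composed with the reflection in the $x$-axis), of the form $\mathrm{diag}(\lambda,\pm\lambda^{-1})$. The remaining constraint is that the horizontal expansion factor $\lambda$ must be realizable by a homeomorphism respecting the dyadic edge-length data (each $A_n,B_n,C'_n,C''_n$ has length $1/2^{n+1}$); this should force $\lambda$ to be an integer power of $2$, exactly the values achieved by the powers of $\psi_\infty$.

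Granting that the derivative is forced into the group $\{\mathrm{diag}(2^k, \pm 2^{-k})\} = \mathrm{der}\,\langle \psi_\infty, \rho_\infty\rangle$, the second main step is to control the kernel of the derivative map, namely the group of translation automorphisms (affine maps with derivative the identity). I would show this kernel is trivial: a nontrivial translation automorphism would act freely and isometrically on $X_\infty$, but the unique essential singularity (the one-point metric completion from Theorem~\ref{T:infty}) must be fixed by any isometry, and an affine automorphism fixing this point with derivative $\id$ and preserving the rigid dyadic combinatorial structure must be the identity. Combining the two steps: for arbitrary $\phi$, choose the unique $\eta \in \langle \psi_\infty,\rho_\infty\rangle$ with $\mathrm{der}\,\eta = \mathrm{der}\,\phi$; then $\eta^{-1}\phi$ has trivial derivative, hence is the identity, so $\phi = \eta$. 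The hard part will be the Veech-group computation: rigorously proving that no exotic $\lambda$ (in particular, no irrational or non-dyadic scaling and no off-diagonal shear) can occur. The cleanest route is probably to exploit the specific system of saddle connections or critical trajectories near the singularity—using the infinite-genus curve family $\{\gamma'_n,\gamma''_n\}$ and the dyadic edge lengths as a rigid combinatorial skeleton that any affine map must respect—rather than attempting a direct analysis of all closed geodesics.
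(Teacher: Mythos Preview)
Your overall architecture is correct and matches the paper's: reduce to computing the Veech group, then show the kernel of $\mathrm{der}$ is trivial. The place where your proposal is genuinely incomplete is the sentence ``the horizontal and vertical foliations are distinguished (e.g.\ by the essential singularity and by the collapsing geometry near it), so $\mathrm{der}\,\phi$ must preserve the horizontal and vertical directions.'' Every direction on $(X_\infty,\omega_\infty)$ defines a foliation that sees the same single wild singularity, so an appeal to ``the singularity'' alone cannot single out two directions; you need an invariant that actually separates the vertical direction from all others. The paper supplies one: it first proves (via the analysis of the interval exchange $f_\infty$ on dyadic rationals) that every vertical critical trajectory is a saddle connection and that these are dense, and then shows that no other direction has this property, by an angular-sector argument at the accumulation point $(0,2/3)$. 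A putative affine map sending another direction to vertical would have to carry a sector of angle $2\pi$ bounded by a non-vertical critical leaf from $(0,2/3)$ onto a sector bounded by a vertical saddle connection, and no such sector exists on the $(0,2/3)$ side. This is the real content, and your suggestion of using the curves $\gamma'_n,\gamma''_n$ or the dyadic edge labels does not obviously produce such an affine invariant, since those are artifacts of a particular polygonal presentation rather than intrinsic data.

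A second, smaller point: the paper does \emph{not} try to show that $\mathrm{der}\,\phi$ is diagonal. It only shows the vertical direction is preserved; then finite area forces $\mathrm{der}\,\phi \in \Lie{SL}_2(\R)$, and after normalizing by powers of $\psi_\infty$ and $\rho_\infty$ one is left with either the identity or a vertical parabolic. The parabolic case is killed by the separate observation that there are no vertical cylinders. Your framework, which jumps straight to ``diagonal of the form $\mathrm{diag}(\lambda,\pm\lambda^{-1})$,'' silently skips the parabolic case; the paper's route via ``preserve one direction, then exclude shears by absence of cylinders'' is both cleaner and logically complete. Your treatment of the kernel (translation automorphisms) and of the abstract group structure of $\langle\psi_\infty,\rho_\infty\rangle$ is fine and agrees with the paper.
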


\subsection{An exchange on infinitely many intervals}

Let us revise our definition of ``interval exchange map'' to 
include injective maps from an interval to itself that are upper 
semicontinuous piecewise isometries. (This keeps with the 
``continuous at left endpoints'' convention, although we may lose 
the property of bijectivity, as we shall see in a later example.) 
Then the vertical foliation of $(X_\infty,\omega_\infty)$ induces 
an interval exchange map $f_\infty : [1,0) \to [1,0)$, which can 
also be defined by a two-step process: first, swap the two halves 
of each interval 
$[\frac{2^n - 1}{2^n},\frac{2^{n+1} - 1}{2^{n+1}})$, for 
$0 \le n < \infty$, then swap $[1,1/2)$ with $[1/2,1)$; 
cf.~\S\ref{S:iet}.

We can encode $f_\infty$ symbolically as follows: if we do not 
allow the binary expansion of a number to terminate with only 
$1$s, then each number in $[0,1)$ has a unique binary expansion. 
Use these to identify $[0,1)$ with the set 
$\Space{B} \subset (\field{F}_2)^\field{N}$ consisting of 
sequences that do not terminate with only $1$s. Given a sequence 
$a = a_0 a_1 a_2 \cdots$, we obtain $f_\infty(a)$ as follows:
\begin{enumerate}
\item find the first $i \in \field{N}$ such that $a_i = 0$, and 
replace $a_{i+1}$ with $a_{i+1}+1$;
\item replace $a_0$ with $a_0 + 1$.
\end{enumerate}
The inverse map $f_\infty\inv$ simply reverses these two steps. 
Both $f_\infty$ and $f_\infty\inv$ are bijections. We remark that 
the first return map of $f_\infty$ on either $[0,1/2)$ or 
$[1/2,1)$ is simply the restriction of $f_\infty^2$ to the 
respective interval.

To aid our study at this point, we use the map $r$ defined in 
\S\ref{S:step2tri} along with the following:
\begin{gather*}
h'(x) = \frac{x}{2}, \qquad\quad
h''(x) = (r \circ h')(x) = \frac{x}{2} + \frac{1}{2}, \\
h_\infty(x) = (h' \circ r)(x) = 
\begin{cases}
\frac{1}{2}\left(x + \frac{1}{2}\right), & x \in [0,\frac{1}{2}) 
\\
\frac{1}{2}\left(x - \frac{1}{2}\right), & x \in [\frac{1}{2},1)
\end{cases} 
\end{gather*}
In terms of binary expansions, we can describe the effects of 
these functions on a sequence $a \in \Space{B}$ as follows:
%\pagebreak
\begin{itemize}
\item $r$ replaces $a_0$ with $a_0 + 1$;
\item $h'$ appends a $0$ to the beginning of the sequence;
\item $h''$ appends a $1$ to the beginning of the sequence;
\item $h_\infty$ replaces $a_0$ with $a_0 + 1$ and appends a $0$ 
to the beginning of the sequence.
\end{itemize}
The formalism of encoding these maps to act on infinite binary 
sequences makes immediate the following result.

\begin{lemma}\label{L:conj}
Let $f_\infty$, $r$, $h'$, $h''$, and $h_\infty$ act on 
$\Space{B}$ as above. Then:
\begin{itemize}
\item $r$ conjugates $f_\infty$ to $f_\infty\inv$.
\item $h'$ conjugates $f_\infty^2\vert_{[0,\,1/2)}$ to 
$f_\infty\inv$.
\item $h''$ conjugates $f_\infty^2\vert_{[1/2,\,1)}$ to 
$f_\infty$.
\item $h_\infty$ conjugates $f_\infty^2\vert_{[0,\,1/2)}$ to 
$f_\infty$.
\end{itemize}
\end{lemma}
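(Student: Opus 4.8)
The plan is to carry out the entire verification in the binary model $\Space{B}$, where the five maps have the explicit descriptions just listed, and to reduce all four assertions to two elementary intertwining identities. Throughout I use the convention (matching the role of $h_g$ in \S\ref{S:iet}) that ``$\phi$ conjugates $A$ to $B$'' means $B = \phi\inv A \phi$; moreover, by the preceding remark that the first return of $f_\infty$ to $[0,1/2)$ and to $[1/2,1)$ is $f_\infty^2$ restricted to those intervals, I may work with $f_\infty^2$ directly and never need a separate first-return argument. The first step is to factor $f_\infty = S_2 \circ S_1$, where $S_1$ is step (1) (flip the digit after the first $0$) and $S_2 = r$ is step (2) (flip $a_0$). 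Each is an involution of $\Space{B}$: flipping $a_{i+1}$ leaves positions $0,\dots,i$ untouched, so the first $0$ remains at position $i$ and $S_1^2 = \id$, while $r^2 = \id$ is clear. Hence $f_\infty\inv = S_1 \circ r$, which is precisely the ``reverse the two steps'' description, and the first bullet is immediate: $r f_\infty r = r(r S_1) r = S_1 r = f_\infty\inv$. I would also record the equivalent commutation $r f_\infty = f_\infty\inv r$ and the identity $S_1 = r f_\infty$ for later use.

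The crux is two intertwining relations between $f_\infty$ and the digit-prepending maps, each proved by locating the first $0$ of the enlarged sequence. Relation (i) is $f_\infty \circ h' = h'' \circ r$: in $h'(a) = 0\,a_0 a_1\cdots$ the new leading digit is itself the first $0$, so step (1) flips $a_0$ and step (2) flips the leading $0$, producing $1\,(a_0{+}1)\,a_1\cdots = h''(r(a))$. Relation (ii) is $f_\infty \circ h'' = h' \circ S_1$: prepending a $1$ in $h''(a) = 1\,a_0 a_1\cdots$ moves the first $0$ from position $i$ to position $i{+}1$, because the intervening entries $a_0,\dots,a_{i-1}$ are all $1$; thus step (1) flips $a_{i+1}$ and step (2) restores the leading $1$ to $0$, yielding $h'(S_1(a))$. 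Pinning down the first-$0$ position in (ii), and confirming that the prepended or flipped digit does not accidentally become the relevant first $0$, is the one spot where genuine care is needed; this bookkeeping is the main (and fairly mild) obstacle.

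With the factorization, the involutivity of $S_1$ and $r$, the identity $S_1 = r f_\infty$, and relations (i)--(ii) in hand, the remaining three bullets each collapse to a one-line computation. For the second bullet, $f_\infty^2 h' = f_\infty h'' r = h' S_1 r = h'(r f_\infty r) = h' f_\infty\inv$, so $h'$ conjugates $f_\infty^2|_{[0,1/2)}$ to $f_\infty\inv$. For the third, $f_\infty^2 h'' = f_\infty h' S_1 = h'' r S_1 = h'' r (r f_\infty) = h'' f_\infty$, so $h''$ conjugates $f_\infty^2|_{[1/2,1)}$ to $f_\infty$. For the fourth, using $h_\infty = h' \circ r$ together with the commutation $f_\infty\inv r = r f_\infty$ from the first bullet, $f_\infty^2 h_\infty = (f_\infty^2 h')\,r = h' f_\infty\inv r = h'(r f_\infty) = h_\infty f_\infty$, so $h_\infty$ conjugates $f_\infty^2|_{[0,1/2)}$ to $f_\infty$. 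An alternative, more pedestrian route would be to evaluate each conjugation directly on a general sequence $a$ with a case split on $a_0$ and on the position of the first $0$; this works but is exactly the computation that relations (i)--(ii) package once and for all.
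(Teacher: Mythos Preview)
Your proof is correct and cleaner than the paper's. The paper proves the second bullet by the direct element-wise computation $f_\infty^2 h' f_\infty = h'$: it fixes a sequence $a$ with first zero at position $i_0$, writes out $h' f_\infty(a)$ digit by digit, applies $f_\infty$ once to get a sequence whose first zero sits at position $i_0+1$, applies $f_\infty$ again, and checks the result is $h'(a)$; the remaining bullets are dismissed as ``similar'' and as consequences of the first two. You instead factor $f_\infty = r\,S_1$ as a product of involutions and isolate the two one-step intertwinings $f_\infty h' = h'' r$ and $f_\infty h'' = h' S_1$, after which every bullet is a three-symbol algebraic rewrite. The gain is that each of your relations (i)--(ii) requires locating only a single first zero and tracking a single digit flip, whereas the paper's computation tracks two flips through three successive applications of $f_\infty$ and must re-identify the first zero after each. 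Your approach also makes transparent \emph{why} the last two bullets follow from the first two, which the paper merely asserts. The paper's route, on the other hand, needs no auxiliary decomposition and may read as more self-contained to someone who has not internalized the involution structure.
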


\begin{proof}
We will prove the second claim. It is equivalent to show that 
$f_\infty^2 h' f_\infty = h'$. Let $a = a_0 a_1 a_2 \cdots$ be a 
sequence in $\Space{B}$, and let $i_0 \ge 0$ be the first value 
for which $a_{i_0} = 0$. Then $(h'f_\infty(a))_0 = 0$, 
$(h'f_\infty(a))_1 = a_0+1$, $(h'f_\infty(a))_{i_0+2} = 
a_{i_0+1}+1$, and $(h'f_\infty(a))_{i+1} = a_i$ for all other $i$. 
Applying $f_\infty$ to $h'f_\infty(a)$ results in 
$(1,a_0,\dots,a_{i_0-1},0,a_{i_0+1}+1,a_{i_0+2},\dots)$. Now 
$i_0+1$ is the first index $i$ such that 
$(f_\infty h'f_\infty a)_{i} = 0$. Applying $f_\infty$ again 
replaces $(f_\infty h'f_\infty a)_{i_0+2}$ with $a_{i+1}$ and 
changes the leading $1$ to a $0$, so that 
$f_\infty^2 h' f_\infty(a) = h'(a)$.

The proofs of the other claims are similar; in fact, the first 
claim is trivial, while the latter two claims follow from the 
first two.
\end{proof}

\begin{remark}
As a caveat regarding exchanges of infinitely many intervals, we 
describe the interval exchange $F_\infty$ induced on a vertical 
segment by the horizontal foliation. We use the horizontal flow in 
the positive $x$-direction, in which case $F_\infty$ has the 
following effect on $\Space{B}$: for each sequence $a$, 
\begin{enumerate}
\item find the least $i > 0$ such that $a_i \ne a_0$;
\item replace $a_{i-1-2j}$ with $a_{i-1-2j} + 1$ for all 
$0 \le j \le \lfloor i/2 \rfloor$.
\end{enumerate}
Note that this algorithm fails to define $F_\infty$ on the zero 
sequence $\bar{0}$; we will see momentarily that $1/3$ does not 
have a preimage by $F_\infty$, and so we can define $F_\infty(0) 
= 1/3$ without compromising the injectivity or semicontinuity of 
$F_\infty$. The inverse $F_\infty\inv$ acts on $\Space{B}$ as 
follows: for each sequence $a$, 
\begin{enumerate}
\item find the least $i > 0$ such that $a_i = a_{i-1}$;
\item replace $a_{i-1-2j}$ with $a_{i-1-2j} + 1$ for all 
$0 \le j \le \lfloor i/2 \rfloor$.
\end{enumerate}
This algorithm fails for \emph{two} points in $\Space{B}$, namely 
$\overline{01} = 1/3$ and $\overline{10} = 2/3$; these have no 
pre-images by $F_\infty$. Hence we can ``fix'' $F_\infty$ by 
defining $F_\infty(0)$ to be either $1/3$ or $2/3$, but the choice 
is arbitrary. In either case, $F_\infty$ will still not have all 
of $\Space{B}$ as its image. The special role of $1/3$ and $2/3$ 
will be useful to keep in mind (see the proof of 
Lemma~\ref{L:vertnotequiv}).
\end{remark}

Let $\Space{D} \subset [0,1)$ denote the set of dyadic rationals 
in $[0,1)$---that is, the set of rational numbers of the form 
$n/2^m$ for some $n,m \in \Z$. $\Space{D}$ sits inside $\Space{B}$ 
as the set of sequences that are eventually $0$. For each 
$x \in [0,1)$, let $\mathcal{O}^\pm(x)$ be the orbit of $x$ under 
$f_\infty^{\pm1}$. 

\begin{lemma}\label{L:finforbit}
$\Space{D} = \mathcal{O}^\pm(0) \sqcup \mathcal{O}^\pm(1/2)$.
\end{lemma}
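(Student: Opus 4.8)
The plan is to prove the three assertions implicit in the statement separately: that both orbits lie in $\Space{D}$, that they are disjoint, and that their union is all of $\Space{D}$. The first is immediate from the symbolic description: a dyadic is a sequence in $\Space{B}$ that is eventually $0$, and the recipe for $f_\infty$ changes only the entries in positions $0$ and $i_0+1$, where $i_0$ is the first index with $a_{i_0}=0$; both positions are finite, so $f_\infty$ and $f_\infty\inv$ carry eventually-zero sequences to eventually-zero sequences. As $0,1/2\in\Space{D}$, both $\mathcal{O}^\pm(0)$ and $\mathcal{O}^\pm(1/2)$ lie in $\Space{D}$.

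For disjointness I would exhibit a parity invariant. The two steps defining $f_\infty$ flip the entry in position $0$ and the entry in position $i_0+1$, and these are distinct positions; thus $f_\infty$ flips exactly two entries, so the parity of the (finite) sum $\sum_j a_j$ is preserved by $f_\infty$, hence also by $f_\infty\inv$. Since $0=\overline{0}$ has an even number of $1$s and $1/2=1\overline{0}$ has an odd number, $\mathcal{O}^\pm(0)$ and $\mathcal{O}^\pm(1/2)$ lie in different parity classes and are therefore disjoint.

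The content is the inclusion $\Space{D}\subseteq\mathcal{O}^\pm(0)\cup\mathcal{O}^\pm(1/2)$, which I would prove by induction on the length $N(b)$ of a dyadic $b$, where $N(b)$ is one more than the index of the last nonzero entry and $N(\overline{0})=0$. The cases $N=0$ (only $0$) and $N=1$ (only $1/2$) are the base points themselves. For $N(b)=N+1\ge2$, write $b=h'(c)$ if $b_0=0$ and $b=h''(c)$ if $b_0=1$, where $c$ is $b$ with its leading entry deleted; in either case $N(c)=N$, so the inductive hypothesis applies to $c$. The reduction is justified by Lemma~\ref{L:conj}: the first-return map of $f_\infty$ to $[0,1/2)$ (resp.\ $[1/2,1)$) is $f_\infty^2$ restricted to that half, and $h'$ (resp.\ $h''$) conjugates this first-return map to $f_\infty\inv$ (resp.\ $f_\infty$). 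Consequently $h'$ and $h''$ send each $f_\infty^\pm$-orbit into a single $f_\infty^\pm$-orbit, namely the one through the image of any chosen point.

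It remains to track the base points. A direct computation gives $f_\infty(0)=3/4$ and $f_\infty(1/4)=1/2$. Since $h'(0)=0$ and $h'(1/2)=1/4$, the map $h'$ sends $\mathcal{O}^\pm(0)$ into $\mathcal{O}^\pm(0)$ and $\mathcal{O}^\pm(1/2)$ into $\mathcal{O}^\pm(1/4)=\mathcal{O}^\pm(1/2)$; since $h''(0)=1/2$ and $h''(1/2)=3/4$, the map $h''$ sends $\mathcal{O}^\pm(0)$ into $\mathcal{O}^\pm(1/2)$ and $\mathcal{O}^\pm(1/2)$ into $\mathcal{O}^\pm(3/4)=\mathcal{O}^\pm(0)$. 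In every case the union $\mathcal{O}^\pm(0)\cup\mathcal{O}^\pm(1/2)$ is carried into itself, so from $c\in\mathcal{O}^\pm(0)\cup\mathcal{O}^\pm(1/2)$ we conclude $b\in\mathcal{O}^\pm(0)\cup\mathcal{O}^\pm(1/2)$, closing the induction. I expect the main obstacle to be exactly this bookkeeping: one must follow both base points through both conjugacies and recognize, using the explicit values $f_\infty(0)=3/4$ and $f_\infty(1/4)=1/2$, that the four resulting orbits collapse back onto the two orbits $\mathcal{O}^\pm(0)$ and $\mathcal{O}^\pm(1/2)$. Identifying the two-entry-flip parity invariant is the second small but essential point, since it is what upgrades the cover to a disjoint union.
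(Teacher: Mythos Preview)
Your proof is correct and follows essentially the same approach as the paper: both argue by induction on the length of the binary expansion using the conjugacy relations of Lemma~\ref{L:conj}, and both separate the two orbits via the Thue--Morse parity invariant. The only cosmetic difference is that you peel off the leading digit (writing $b=h'(c)$ or $b=h''(c)$ and using that $h',h''$ send $f_\infty$-orbits to $f_\infty$-orbits), whereas the paper peels off the last two digits and then commutes the resulting $f_\infty^{\pm1}$ leftward through the word $\eta_0\cdots\eta_{\mathrm{Ind}(a)-2}$; this yields the same reduction and additionally records the exact power of $f_\infty$ needed.
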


A more complete way to state this result is that the union of the 
forward and backward orbits of a sequence $a \in \Space{D}$ is 
entirely determined by the parity of the number of $1$s in the 
sequence $a$. We call $\mathrm{TM}(a) = \sum a_i \in \field{F}_2$ 
the \dfn{Thue--Morse function}: for any particular 
$a \in \Space{D}$, this sum has finitely many terms, and 
$\mathrm{TM}(a)$ is invariant under $f_\infty$ because two digits 
are changed from $a$ to $f_\infty(a)$. We also define the 
\dfn{index of $a$} to be the smallest natural number 
$\mathrm{Ind}(a) \in \field{N}$ such that $a_i = 0$ for all 
$i > \mathrm{Ind}(a)$. (Recall that our sequences in $\Space{B}$ 
start with $a_0$, and so $\mathrm{Ind}(\bar{0}) = 
\mathrm{Ind}(1\bar{0}) = 0$.) We will show that the following 
table determines which orbit contains 
$a \in \Space{D} - \{\bar{0},1\bar{0}\}$:
\begin{equation}\label{Eq:table}
\begin{tabular}{l c | c c }
& & \multicolumn{2}{c}{$\mathrm{TM}(a)$} \\
& & $0$ & $1$ \\ \hline
& even & $\mathcal{O}^-(0)$ & $\mathcal{O}^+(1/2)$ \\
\raisebox{1.3ex}[0pt]{$\mathrm{Ind}(a)$\quad}& 
odd & $\mathcal{O}^+(0)$ & $\mathcal{O}^-(1/2)$ 
\end{tabular}
\end{equation}
One consequence of the proof will be a quick algorithm for 
computing the exact value of $n \in \Z$ such that 
$f_\infty^n(0) = a$ or $f_\infty^n(1/2) = a$.

\begin{proof}[Proof of Lemma~\ref{L:finforbit}]
Let $H$ be the semigroup of functions $\Space{B} \to \Space{B}$ 
consisting of words in $h'$ and $h''$. The map from $H$ to 
$\Space{B}$ defined by $w \mapsto w(\bar{0})$ induces a 
set-theoretic bijection between $\Space{D}$ and the quotient of 
$H$ by the relation $w \sim w h'$. Throughout the proof, we will 
use the equivalence $\Space{D} \leftrightarrow H/\!\sim$, by which 
$(a_0, a_1, \dots, a_{\mathrm{Ind}(a)}, 0, \dots)$ corresponds to 
the equivalence class of 
$\eta_0 \eta_1 \cdots \eta_{\mathrm{Ind}(a)}$, with 
\[
\eta_i = \begin{cases}
h' & \text{if}\ a_i = 0 \\
h'' & \text{if}\ a_i = 1
\end{cases}.
\]
In particular, $\eta_{\mathrm{Ind(a)}} = h''$ if 
$\mathrm{Ind}(a) \ge 1$.

Let $a \in \Space{D}$. We proceed by induction on 
$\mathrm{Ind}(a)$. Direct computation shows that 
\[
h'' h''(\bar{0}) = f_\infty h'(\bar{0}) 
= f_\infty(\bar{0})
\qquad\text{and}\qquad
h'h''(\bar{0}) = f_\infty\inv h''(\bar{0}) 
= f_\infty\inv(1\bar{0}),
\]
and therefore if $\mathrm{Ind}(a) = 1$, $a$ is in the union 
of the orbits of $\bar{0}$ and $1\bar{0}$. Now suppose 
$\mathrm{Ind}(a) \ge 2$, and let 
$w = \eta_0 \eta_1 \cdots \eta_{\mathrm{Ind}(a)-1} h''$ be the 
corresponding word in $H$. Using the above computations, we can 
rewrite the effect of $w$ on $\bar{0}$ in the following way:
\[
w(\bar{0}) = \begin{cases}
\eta_0 \eta_1 \cdots \eta_{\mathrm{Ind}(a)-2} 
  f_\infty h'(\bar{0}) 
& \text{if}\ \eta_{\mathrm{Ind}(a)-1} = h'' \\
\eta_0 \eta_1 \cdots \eta_{\mathrm{Ind}(a)-2} 
  f_\infty\inv h''(\bar{0}) 
& \text{if}\ \eta_{\mathrm{Ind}(a)-1} = h'
\end{cases}.
\]
From Lemma~\ref{L:conj}, we have 
\[
f_\infty^2 h' = h' f_\infty\inv 
\qquad\text{and}\qquad
f_\infty^2 h'' = h'' f_\infty.
\]
These relations allow us to move $f_\infty$ to the far left of the 
word, at each step exchanging a power of $f_\infty$ for a power 
whose absolute value is twice as great, which means we have 
expressed $a$ as $f_\infty^n(b)$, where $\mathrm{Ind}(b) < 
\mathrm{Ind}(a)$. Here $|n| = 2^{\mathrm{Ind}(a)-1}$, and the 
sign of $n$ is determined by the number of $0$s among 
$a_0,\dots,a_{\mathrm{Ind}(a)-1}$. By induction, we have shown 
that every point of $\Space{D}$ lies in the union of the orbits 
of $\bar{0}$ and $1\bar{0}$. 

Because $\mathrm{TM}(a)$ is invariant under $f_\infty$, $\bar{0}$ 
and $1\bar{0}$ are not in the same orbit, and therefore 
$\Space{D}$ is a disjoint union of these two orbits.
\end{proof}

\begin{remark}
It is not hard to show that both $f_\infty$ and $F_\infty$ are 
ergodic, for example, using elementary linear algebra. It is less 
clear how the flow in other directions on 
$(X_\infty,\omega_\infty)$ behaves.
\end{remark}

\begin{remark}\label{R:avatardist}
We will need a bit more information about the points of 
discontinuity of $f_\infty$. These correspond precisely to 
sequences of the form $11\cdots11\bar{0}$ or $11\cdots1101\bar{0}$ 
(the initial number of $1$s may be zero). From the information in 
\eqref{Eq:table}, we see that the forward and backward orbits of 
both $0$ and $1/2$ each contain infinitely many such points.
\end{remark}

\subsection{Vertical trajectories and the Veech group of 
$(X_\infty,\omega_\infty)$}

\begin{lemma}\label{L:scdense}
Saddle connections are dense in the vertical foliation of 
$(X_\infty,\omega_\infty)$. Every vertical critical trajectory is 
a saddle connection. 
\end{lemma}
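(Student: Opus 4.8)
The plan is to translate both assertions into the symbolic dynamics of the induced exchange $f_\infty$ on the horizontal transversal $[0,1)$, where each vertical trajectory corresponds to an $f_\infty$-orbit and the single essential singularity is ``reached'' exactly when that orbit lands on a discontinuity of $f_\infty$. First I would fix the identification, recalled in Theorem~\ref{T:infty}, of the compact exhaustion $\{K_g\}$ with the truncations that delete the squares of side $1/2^{g+1}$ clustering at the singularity, and establish the key correspondence: a point $x \in [0,1)$ lies on a vertical critical trajectory precisely when its forward or backward $f_\infty$-orbit meets a point of discontinuity of $f_\infty$ after finitely many steps, this being where the trajectory exits every $K_g$ and runs into the singular point. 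By Remark~\ref{R:avatardist} every such discontinuity is a dyadic rational --- one of the sequences $1^k\bar0$ or $1^k01\bar0$ --- and hence lies in $\Space{D}$.

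For the second assertion I would take a vertical critical trajectory $\gamma$ together with a transversal crossing $x \in [0,1)$. By the correspondence some $f_\infty^{\pm n}(x)$ is a discontinuity, so it lies in $\Space{D}$, and since $\Space{D}$ is $f_\infty$-invariant we get $x \in \Space{D}$ as well. Lemma~\ref{L:finforbit} then places $x$ in $\mathcal{O}^{\pm}(0)$ or $\mathcal{O}^{\pm}(1/2)$, while Remark~\ref{R:avatardist} guarantees that the full orbit of $0$ (respectively $1/2$) contains infinitely many discontinuities in both the forward and the backward direction. Writing $x = f_\infty^m(0)$ or $f_\infty^m(1/2)$, there is a discontinuity at some orbit position $>m$ and another at some position $<m$, so the forward orbit of $x$ reaches a discontinuity and so does its backward orbit, each after finitely many returns. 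Consequently $\gamma$ limits onto the singularity in both directions and, crossing the bounded-height, finite-area surface only finitely many times between these two encounters, has finite length; thus $\gamma$ is a saddle connection. In fact this argument shows the sharper equivalence that a vertical trajectory is critical if and only if it is a saddle connection, if and only if its transversal crossings lie in $\Space{D}$.

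For the density statement, the previous paragraph shows that every $x \in \Space{D}$ carries a vertical saddle connection. Since $\Space{D}$ is dense in $[0,1)$ and the transversal meets every leaf of the vertical foliation, continuity of the foliation lets me approximate the leaf through an arbitrary point of $X_\infty$ by leaves through nearby dyadic crossings, each of which is a saddle connection, so the saddle connections are dense. The main obstacle is the correspondence asserted in the first paragraph: one must verify that ``leaving every compact subset $K_g$'' is genuinely equivalent to the orbit landing on a discontinuity in finite time, and not merely to descending asymptotically through ever-deeper squares without ever meeting one. Ruling out such an infinite-length asymptotic approach is the delicate point, and I expect to handle it using the self-similarity encoded in Lemma~\ref{L:conj} together with the geometric-series control of the square sizes $1/2^{n+1}$, which forces a finite-length geodesic escaping every $K_g$ to reach the singularity by actually hitting a discontinuity.
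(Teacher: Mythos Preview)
Your proposal is correct and follows essentially the paper's route: both arguments reduce to the dynamics of $f_\infty$ on the horizontal transversal, invoke Lemma~\ref{L:finforbit} together with Remark~\ref{R:avatardist} to show that the vertical leaf through any $x\in\Space{D}$ meets discontinuities in both time directions and is therefore a saddle connection, and then obtain density from the density of $\Space{D}$ in $[0,1)$. The obstacle you flag at the end is genuine---the paper simply asserts the correspondence without justifying it---but it does not require Lemma~\ref{L:conj}: a vertical leaf through a non-dyadic $x$ is bi-infinite, and a direct inspection of $f_\infty$ near each accumulation point of vertices (for instance, points near $1$ are sent near $1/2$ and then into $[0,1/4)$) shows the orbit is pushed a definite distance away, so such a leaf cannot remain outside any fixed $K_g$ and is therefore not critical.
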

\begin{proof}
Let $x \in \Space{D}$, and consider the point $(x,0)$ on the 
boundary of the unit square. If $x$ is not already a point of 
discontinuity of $f_\infty$, then by Lemma~\ref{L:finforbit} and 
Remark~\ref{R:avatardist}, there exist $m,n > 0$ such that 
$f_\infty^{-m}(x)$ and $f_\infty^n(x)$ are points of discontinuity 
of $f_\infty$. Because $f_\infty$ is determined by the vertical 
flow, this means there is a vertical saddle connection passing 
through $(x,0)$ and connecting $(f_\infty^{-m}(x),0)$ to 
$(f_\infty^n(x),0)$. If $x$ is a point of discontinuity of 
$f_\infty$, then so is $f_\infty(x)$, and there is a vertical 
saddle connection from $(x,0)$ to $(f_\infty(x),1)$.
\end{proof}

The proof shows, moreover, that the union of the vertical critical 
trajectories contains precisely those points that have 
representatives in Figure~\ref{F:infty} with a dyadic rational 
$x$-coordinate. 

For clarity in the proof of Lemma~\ref{L:vertnotequiv}, 
we state the following definition and proposition.

\begin{definition}
An ({\em open}) {\em angular sector} is a Riemannian surface 
isometric to the half-infinite strip 
\[
U_{t,\Theta} = 
\{ z = x + iy \mid x < \log t,\ 0 < y < \Theta \}
\]
with the (conformal) metric $|e^z\,\D{z}|$. $\Theta$ is called 
the {\em angle} of the sector, and $t$ is its {\em radius}.
\end{definition}

\begin{proposition}\label{P:sector}
Let $(X,\omega)$ be a translation surface, and let 
$\phi \in \Lie{Aff}(X,\omega)$ be an affine homeomorphism. 
Suppose $X$ contains an embedded angular sector $U$ whose 
angle is an integer multiple of $\pi$. Then $\phi(U)$ contains 
an angular sector with the same angle as $U$.
\end{proposition}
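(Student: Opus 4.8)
The statement concerns an angular sector $U$, isometric to $U_{t,\Theta}$ with metric $|e^z\,\D z|$, embedded in a translation surface $(X,\omega)$, where $\Theta$ is an integer multiple of $\pi$. The plan is to understand what such a sector looks like in the translation charts of $\omega$ and then track how an affine homeomorphism $\phi$ transforms it. The key observation is that the conformal coordinate $w = e^z$ converts the half-infinite strip $U_{t,\Theta}$ with the metric $|e^z\,\D z|$ into a standard flat sector in the $w$-plane: writing $z = x+iy$, we have $w = e^x e^{iy}$, so $|w| = e^x$ ranges over $(0,t)$ and $\arg w = y$ ranges over $(0,\Theta)$. Thus in the coordinate $w$, the sector is precisely the Euclidean ``pie slice'' (or, when $\Theta$ is a multiple of $\pi$, a branched/multiply-covered half-disk or disk) of radius $t$ and angle $\Theta$, and the metric $|e^z\,\D z| = |\D w|$ is the flat metric. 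Since $\omega$ restricted to $U$ is a holomorphic $1$-form defining the translation structure, and $w$ is a flat coordinate, I would identify $w$ (up to an additive and a unimodular constant, i.e., a translation and rotation of $\R^2$) with a canonical chart of $\omega$ on $U$. The apex of the sector, corresponding to $x \to -\infty$ (that is, $w \to 0$), is a cone point of angle $\Theta$; the hypothesis that $\Theta$ is a multiple of $\pi$ is what makes this apex either a regular point ($\Theta$ a multiple of $2\pi$) or a half-integer cone structure that still closes up consistently under the identifications.

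\textbf{Key steps.} First I would record the identification above: in a canonical chart, $U$ is a flat Euclidean sector of angle $\Theta$ and radius $t$ centered at a point $P$ (the image of the apex). Second, I would invoke the defining property of $\phi$: it is affine with respect to canonical charts, so on $U$ it acts as $z' \mapsto A z' + b$ for a fixed $A = \mathrm{der}\,\phi \in \Lie{GL}_2(\R)$ and some translation $b$ (in the flat $w$-coordinate). Third, I would observe that a linear map $A$ carries the Euclidean sector of angle $\Theta$ to a region bounded by two rays emanating from $\phi(P)$; the crucial point is that an invertible linear map does \emph{not} in general preserve angles, so the image is bounded by two rays whose Euclidean angle is some $\Theta'$ depending on $A$ and on the two edge-directions of $U$. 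However, I claim $\phi(U)$ nonetheless \emph{contains} an angular sector of angle exactly $\Theta$: the point is that $\phi$ is a homeomorphism of the whole surface, so $\phi(P)$ is again a point whose total cone angle is preserved (affine maps are local homeomorphisms away from singularities and permute cone points preserving cone angle, since cone angle is $\frac{1}{2\pi}\int$ of the curvature concentrated there, a topological/conformal invariant). Since the total angle around $\phi(P)$ is at least $\Theta$ (it equals the cone angle at $\phi(P)$, which equals that at $P$, namely $\ge \Theta$), and since $\phi(U)$ is an open neighborhood of a sub-wedge of that total angle, I can inscribe inside $\phi(U)$ a genuine angular sector of angle $\Theta$ of possibly smaller radius. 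Concretely, the image of the flat wedge under the linear map $A$ is a (possibly thinner or wider) flat wedge, and inside the surface near $\phi(P)$ there is room, within the preserved total cone angle, to fit a sector of angle $\Theta$; one chooses its radius small enough that it lies inside $\phi(U)$.

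\textbf{Main obstacle.} The subtle point — and the step I expect to require the most care — is that a general $A \in \Lie{GL}_2(\R)$ distorts Euclidean angles, so the literal image $\phi(U)$ is a wedge of angle $\Theta' \ne \Theta$ in general. The conclusion wanted is that $\phi(U)$ \emph{contains} a sector of the \emph{same} angle $\Theta$, not that it equals one; resolving this requires the observation that it is the total cone angle at the apex, not the shape of any particular wedge, that is the affine invariant. If $\Theta' \ge \Theta$, a sub-wedge of angle $\Theta$ sits directly inside $\phi(U)$. If $\Theta' < \Theta$, one must instead use that $\phi(P)$ is a cone point of total angle equal to that of $P$ (hence $\ge \Theta$) and that $\phi(U)$, together with the surface geometry immediately around $\phi(P)$, still affords an embedded sector of full angle $\Theta$ — here one may need to extend slightly beyond the literal image using that $\phi$ is defined on all of $X$, or argue that $U$ can be chosen with $\Theta$ an integer multiple of $\pi$ precisely so that its two bounding edges are identified in a way compatible with $A$. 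I would therefore focus the rigorous argument on pinning down the cone angle at $\phi(P)$ and verifying that a full $\Theta$-sector embeds, treating the linear distortion of the boundary rays as a matter of shrinking the radius rather than changing the angle.
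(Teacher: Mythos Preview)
Your approach has a genuine gap, and it stems from not using the hypothesis $\Theta \in \pi\Z$ in the right place. You correctly note that a general $A \in \Lie{GL}_2(\R)$ distorts Euclidean angles, and you then try to compensate via the total cone angle at the apex $\phi(P)$. But that line of argument cannot establish the statement as written: the claim is that an angular sector of angle $\Theta$ lies inside $\phi(U)$ itself, not merely somewhere in a neighborhood of $\phi(P)$. You even flag this (``one may need to extend slightly beyond the literal image''), which would give a different and weaker conclusion. Moreover, the apex $P$ need not lie in $X$ at all (in the application it is the singular point of $(X_\infty,\omega_\infty)$), so invoking ``the cone angle at $\phi(P)$'' is delicate.

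The point you are missing is that when $\Theta$ is an integer multiple of $\pi$, the angle distortion you worry about \emph{does not occur}. Reduce to $\Theta = \pi$: an angular sector of angle $\pi$ is, in a flat chart, a Euclidean half-disk, bounded by a straight diameter. A linear map $A$ sends this half-disk to a half-ellipse bounded by the image line $A(\text{diameter})$, and a half-ellipse is still a region of opening angle exactly $\pi$ at its center; it therefore contains a half-disk (sector of angle $\pi$) of radius the semi-minor axis. For general $\Theta = n\pi$, decompose $U$ into $n$ consecutive half-disks and apply this to each. This is precisely the paper's argument: ``it suffices to consider the case of a sector with angle $\pi$. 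For then $\phi$ transforms the sector into a half-ellipse, which thus contains a sector with angle $\pi$ and the same center.'' The hypothesis $\Theta \in \pi\Z$ is there exactly so that the bounding rays are collinear, a property linear maps preserve; your case split ``$\Theta' < \Theta$'' is vacuous.
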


To see why this proposition is true, it suffices to consider 
the case of a sector with angle $\pi$. For then $\phi$ transforms 
the sector into a half-ellipse, which thus contains a sector 
with angle $\pi$ and the same center as the ellipse.

\begin{lemma}\label{L:vertnotequiv}
The vertical direction of $(X_\infty,\omega_\infty)$ is not 
affinely equivalent to any other direction on 
$(X_\infty,\omega_\infty)$.
\end{lemma}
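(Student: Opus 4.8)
The plan is to show that the vertical direction is special among all directions by exhibiting a geometric invariant that distinguishes it. The key structural feature of $(X_\infty,\omega_\infty)$ is its single essential singularity, near which all the curvature is concentrated. My strategy is to understand the local geometry at this singularity by decomposing a neighborhood of it into angular sectors, in the sense of Proposition~\ref{P:sector}, and to argue that the vertical direction interacts with these sectors in a way no other direction does. Specifically, an affine equivalence sending the vertical direction to another direction $\theta$ would have to preserve the singularity (it is the unique non-removable singular point, hence fixed by every element of $\Lie{Aff}(X_\infty,\omega_\infty)$) and would carry vertical critical trajectories to critical trajectories in direction $\theta$; by Lemma~\ref{L:scdense} the former are all saddle connections and are dense, so the latter would have to be as well.

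First I would pin down exactly which trajectories emanate from the singularity and in what directions. By the remark following Lemma~\ref{L:scdense}, the vertical critical trajectories are precisely those through points with dyadic-rational $x$-coordinate, and each such trajectory is a saddle connection; moreover the singularity is approached along infinitely many vertical rays accumulating in a specific combinatorial pattern governed by the binary encoding of \S4.1. The plan is to encode the local picture at the singularity as a cyclic sequence of angular sectors (each of angle $\pi$, coming from the two rectangles of Figure~\ref{F:infty}) glued along the accumulating family of vertical saddle connections. The vertical direction is then characterized as the unique direction in which \emph{every} critical trajectory closes up into a saddle connection of the special kind produced in Lemma~\ref{L:scdense}.

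Next I would suppose for contradiction that some $\phi \in \Lie{Aff}(X_\infty,\omega_\infty)$ sends the vertical direction to a direction $\theta$ not parallel to the vertical. Applying Proposition~\ref{P:sector} to the embedded angular sectors at the singularity, $\phi$ must carry sectors to regions containing sectors of the same angle; tracking how the boundary saddle connections transform, the image direction $\theta$ would have to support a dense family of saddle connections through the singularity with the same self-similar accumulation structure. Here is where the special role of $1/3$ and $2/3$ flagged in the remark on $F_\infty$ enters: these points have no preimages under the horizontal interval exchange, so the horizontal direction (and, by the self-similarity and the action of $\psi_\infty$, any direction in the $\Z\times\Z/2\Z$ orbit of horizontal) fails to have the full density property that the vertical direction enjoys. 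The contradiction is that $\theta$ cannot simultaneously inherit the vertical direction's property that \emph{all} critical trajectories are saddle connections while respecting the sector structure forced by Proposition~\ref{P:sector}.

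The hard part will be making the comparison of local sector structures rigorous enough to rule out \emph{all} non-vertical directions rather than just the horizontal one. It is not enough to invoke the $1/3$, $2/3$ obstruction for a single direction; I need an invariant of the germ at the singularity—most naturally a cyclically ordered sequence of sector angles together with the lengths or combinatorial labels of the bounding saddle connections—that is preserved by affine maps and that takes a distinguished value only in the vertical direction. Controlling how this infinite combinatorial data transforms under an affine map whose derivative need not be diagonal, and confirming that no shear or rotation can reproduce the vertical accumulation pattern in another direction, is the delicate step; the finite-genus approximations $\iota_g$ of Theorem~\ref{T:limit} and the explicit encoding of $f_\infty$ versus $F_\infty$ should supply the needed rigidity.
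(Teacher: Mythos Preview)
Your outline gathers the right ingredients---angular sectors, Proposition~\ref{P:sector}, Lemma~\ref{L:scdense}, and the special points $1/3$, $2/3$---but it is not yet a proof, and the ``hard part'' you flag is both unresolved and unnecessary. You propose building a complete invariant of the germ at the singularity (a cyclic sequence of sector angles plus combinatorial labels) and showing it is preserved only by maps fixing the vertical direction. That program might eventually succeed, but it is far more than what is needed, and you have not indicated how to carry it out.

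The paper's argument bypasses the global invariant entirely by choosing \emph{one} critical trajectory and deriving a contradiction from it alone. The point $(0,2/3)$ in Figure~\ref{F:infty} is an accumulation point of edge endpoints; this is the geometric role of $1/3$ and $2/3$---not, as you suggest, that the horizontal direction fails a density property, but that the singularity is approached along the boundary at precisely these heights. Consequently, for \emph{any} non-vertical $\theta$, the critical leaf $L$ emanating from $(0,2/3)$ in direction $\theta$ admits no embedded angular sector of angle $\pi$ on at least one side. If some $\phi$ sent $\theta$ to vertical, then $\phi(L)$ would be a vertical critical trajectory, hence a saddle connection by Lemma~\ref{L:scdense}; after composing with powers of $\psi_\infty$ and $\rho_\infty$ one may take $\phi(L)$ to be the segment $L_0$ from $(0,0)$ to $(0,1/2)$, which \emph{does} sit inside an embedded $2\pi$-sector. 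Proposition~\ref{P:sector} applied to $\phi^{-1}$ then forces a $2\pi$-sector around $L$, contradicting the accumulation at $(0,2/3)$.

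So the missing idea is this localization: a single non-vertical critical leaf issuing from an accumulation point of the singularity already yields the contradiction, and no comparison of infinite combinatorial data, no control of shears, and no appeal to the finite-genus approximations is required.
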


\begin{proof}%[Proof of Lemma~\ref{L:vertnotequiv}]
Let $\mathcal{F}_v$ be the vertical foliation of 
$(X_\infty,\omega_\infty)$, and let $\mathcal{F}_\theta$ be the 
foliation in some other direction $\theta$. Assume there exists 
some $\phi \in \Lie{Aff}(X_\infty,\omega_\infty)$ that sends 
$\theta$ to the vertical direction. Let $L$ be the critical leaf 
of $\mathcal{F}_\theta$ emanating from $(0,2/3)$ in 
Figure~\ref{F:infty}. Then $\phi(L)$ must be a critical trajectory 
in the vertical direction, which means it must be a saddle 
connection, by Lemma~\ref{L:scdense}. By composing $\phi$ with 
some power of $\psi_\infty$ and $\rho_\infty$, if necessary, we 
may assume $\phi(L)$ is the saddle connection $L_0$ from $(0,0)$ 
to $(0,1/2)$. 

Now consider an angular sector 
$U = \mathrm{image}\, (U_{\eps,2\pi} \to X_\infty)$, with 
$\eps < 1/8$, such that the radius in the direction of angle $\pi$ 
is sent to a portion of $L_0$. By Proposition~\ref{P:sector}, 
because the angle of $U$ is an integer multiple of $\pi$, 
$\phi\inv(U)$ must also be a sector of angle $2\pi$. But this 
is impossible, because the two halves of $U$ on either side of 
$L_0$ must be sent to sectors of radius $\pi$ with $\phi\inv(L_0) 
= L$ as a boundary radius; no such sectors exist, due to the 
accumulation of saddle connections at $(0,2/3)$. 
Therefore no affine homeomorphism can send the vertical direction 
of $(X_\infty,\omega_\infty)$ to any other direction.
\end{proof}

\begin{remark}
The distinction between vertical critical trajectories on 
$(X_\infty,\omega_\infty)$ and those emanating from the points 
$(1/2,1/3)$ and $(0,2/3)$ in Figure~\ref{F:infty} can also be 
made using a purely topological criterion, rather than the 
geometric criterion of Proposition~\ref{P:sector}. This amounts 
to a study of the space of critical trajectories on a translation 
surface of infinite type, which is the subject of \cite{jpBfV10}.
\end{remark}

Now we are ready to prove the main theorem of this section.

\begin{proof}[Proof of Theorem~\ref{T:affinfty}]
By Lemma~\ref{L:vertnotequiv}, any affine homeomorphism $\phi$ of 
$(X_\infty,\omega_\infty)$ must preserve the vertical direction. 
Because it must preserve the set of saddle connections, and the 
lengths of the vertical saddle connections are all powers of $2$, 
the derivative of $\phi$ must act on the vertical direction by 
multiplication by $\pm 2^n$ for some $n \in \Z$. By composing 
$\phi$ with a power of $\psi_\infty$ and $\rho_\infty$, if 
necessary, we may assume that $\phi$ is orientation-preserving 
and the derivative of $\phi$ is the identity in the vertical 
direction. Note that, because the area of 
$(X_\infty,\omega_\infty)$ is finite, the derivative of $\phi$ 
must lie in $\Lie{SL}_2(\R)$, which implies that its only 
eigenvalue is $1$.

Thus $\phi$ is either a translation automorphism or a parabolic 
map. The latter is impossible because $(X_\infty,\omega_\infty)$ 
does not have any cylinders in the vertical direction. The 
existence of non-trivial translation automorphisms is ruled out 
directly, for example by observing that each vertical saddle 
connection has only one other of the same length (its image by 
$\rho_\infty$), and no translation automorphism can carry one to 
the other. Therefore the original map $\phi$ was a product of a 
power of $\psi_\infty$ and $\rho_\infty$, and the result is 
proved.
\end{proof}

\begin{remark}\label{R:veech}
In the proof of Theorem~\ref{T:affinfty}, we showed that 
$(X_\infty,\omega_\infty)$ has no non-trivial translation 
automorphisms. The same is true of the finite-genus surfaces 
$(X_g,\omega_g)$: as observed in the proof of 
Corollary~\ref{C:nothyp}, each $(X_g,\omega_g)$ (with $g \ge 4$) 
has a saddle connection such that no other saddle connection has 
the same developing vector; this rules out the possibility of 
$\Lie{Aff}(X_g,\omega_g)$ containing a non-trivial translation. 
A similar argument works for $g = 3$. 
%if $\Lie{Aff}(X_g,\omega_g)$ contained some 
%non-trivial translation $\tau$, then the quotient of $X_g$ by 
%$\tau$ would yield a surface carrying a one-form $\omega_g/\tau$ 
%such that $(X_g/\tau,\omega_g/\tau)$ also had a pseudo-Anosov 
%automorphism with expansion constant $\alpha$. But the genus of 
%$X_g/\tau$ would be strictly less than $g$, and would form an 
%upper bound for the algebraic degree of the trace field 
%(see, e.g., \cite{rKjS00,ctM03}), thereby leading to a 
%contradiction. 
We conclude that for any $3 \le g \le \infty$, every affine map 
of $(X_g,\omega_g)$ is uniquely determined by its derivative; 
this means that $\Lie{Aff}(X_g,\omega_g)$ can be identified with 
the Veech group $\Gamma(X_g,\omega_g)$ in $\Lie{GL}_2(\R)$. We 
can thus compare the groups $\Lie{Aff}(X_g,\omega_g)$ as 
subgroups of $\Lie{GL}_2(\R)$, even though {\em a priori} they 
are groups of homeomorphisms of surfaces with different genera.

Recall that a sequence $\{\Gamma_n\}_{n=0}^\infty$ of closed 
subgroups of $\Lie{GL}_2(\R)$ {\em converges geometrically} 
%(or converges in the {\em Chabauty topology}) 
to $\Gamma_\infty \subset \Lie{GL}_2(\R)$ if both of the 
following hold:
\begin{enumerate}
\item If $\{\gamma_n \in \Gamma_n\}$ is a sequence of elements 
converging to $\lim \gamma_n = \gamma_\infty$, then 
$\gamma_\infty \in \Gamma_\infty$.
\item Any $\gamma \in \Gamma_\infty$ is obtained as a limit of 
$\gamma_n \in \Gamma_n$ as in (1).
\end{enumerate}
It follows immediately from the definitions and from 
Lemma~\ref{L:1/2} that the sequence $\gen{\psi_g,\rho_g} \subset 
\Lie{Aff}(X_g,\omega_g)$ converges geometrically to 
$\gen{\psi_\infty,\rho_\infty} = 
\Lie{Aff}(X_\infty,\omega_\infty)$. A natural question is whether 
the groups $\Lie{Aff}(X_g,\omega_g)$ with $g$ finite converge 
geometrically to $\Lie{Aff}(X_\infty,\omega_\infty)$. This 
question is of particular interest since it is currently unknown 
whether there exists a translation surface of finite genus whose 
affine group contains a finite-index cyclic subgroup generated by 
a pseudo-Anosov element. If it is true that 
$\Lie{Aff}(X_g,\omega_g)$ converges to 
$\Lie{Aff}(X_\infty,\omega_\infty)$, then this would at least 
show that the Veech groups of $(X_g,\omega_g)$ for $g$ large 
enough are ``close to'' cyclic groups.

This result is not yet known, although early investigations with 
other families of surfaces that converge (uniformly on compact 
subsets) to a limit surface suggest that one can in general 
expect the geometric limit of the Veech groups to be contained 
in the Veech group of the limiting surface. In the case of the 
Arnoux--Yoccoz surfaces, since we have subgroups of 
$\Lie{Aff}(X_g,\omega_g)$ converging to 
$\Lie{Aff}(X_\infty,\omega_\infty)$, we would then in fact have 
the equality $\lim_{n\to\infty} \Lie{Aff}(X_g,\omega_g) = 
\Lie{Aff}(X_\infty,\omega_\infty)$, taking the geometric limit.
\end{remark}

\section*[Appendix]{Appendix. From the top: $g = 1,2$}

In \S\ref{S:limit}, we extended the family of Arnoux--Yoccoz 
surfaces $(X_g,\omega_g)$ to the index $g = \infty$. In this 
appendix we extend the construction of \S\ref{S:step2tri} to 
create $(X_1,\omega_1)$ and $(X_2,\omega_2)$ so that the 
sequence $(X_g,\omega_g)$ is defined for all indices 
$1 \le g \le \infty$.

\subsection*{$g = 1$}

The defining equation for $\alpha$ in this case is $\alpha = 1$. 
The corresponding surface is a torus, formed from the unit square 
by the usual top-bottom and left-right identifications. Hence 
$(X_1,\omega_1) = (\C/(\Z\oplus{i}\Z),\D{z})$ and $\psi_1$ is the 
identity map.

\subsection*{$g = 2$}

Here we get the defining equation $\alpha^2 + \alpha = 1$, which 
means that $\alpha = (\sqrt{5}-1)/2$ is the inverse of the golden 
ratio, as mentioned in the introduction. Beginning with the unit 
square, a single square of side length $1 - \alpha = \alpha^2$ is 
removed from the upper right corner. Two slits are made, one from 
$(\alpha/2,0)$ to $(\alpha/2,1)$ and the other from 
$((1+\alpha)/2,0)$ to $((1+\alpha)/2,\alpha)$, thereby cutting 
the staircase into three separate pieces. After the usual 
identifications are made, following the procedure of 
\S\ref{S:step2tri}, the result is a disconnected pair of tori. 
This is to be expected: the corresponding interval exchange map 
$f_2$ is reducible. Viewed on the circle $[0,1]/\{0\sim1\}$, it 
splits into two interval exchanges, each of which swaps a pair of 
segments whose lengths are in the golden ratio. The pair of tori 
taken together admits a pseudo-Anosov homeomorphism $\psi_2$ with 
expansion constant $1/\alpha = (1+\sqrt{5})/2$, which in the 
process exchanges the components.

Genus $2$ is not entirely absent in this picture, however. 
Indeed, the surface constructed above is a limit of surfaces 
in $\mathcal{H}(1,1)$ and therefore lies in the principal 
boundary of this stratum. If we shorten the height of the first 
slit in the paragraph above to $1-\eps$ and that of the second 
slit to $\alpha-\eps$, then the same identifications are 
possible, and we obtain a connected sum of the two tori, 
resulting in two cone points of angle $4\pi$. As $\eps \to 0$, 
the two cone points collapse into a single point, which becomes 
a marked point on each of the two tori.

Moreover, the period lattices of the two tori that compose 
$X_2$ satisfy a remarkable property: if either is scaled by 
a factor of $\sqrt{5}$, the result is a sublattice of index 
$5$ in the other. This implies that $(X_2,\omega_2)$ lies in 
the boundary of the ``eigenform locus'' defined by McMullen 
\cite{ctM07} (see also \cite[\S6]{kC04}), which is composed of 
surfaces $(X,\omega)$ in $\mathcal{H}(1,1)$ such that the 
Jacobian variety of $X$ admits real multiplication with $\omega$ 
as an eigenform. (The author thanks Barak Weiss for pointing out 
this feature of $(X_2,\omega_2)$.)

Because $(X_2,\omega_2)$ is not connected, we adopt the convention 
that the group $\Lie{Aff}(X_2,\omega_2)$ only consists of affine 
self-maps each of which has constant derivative. The 
orientation-reversing map $\rho_2 \in \Lie{Aff}(X_2,\omega_2)$ 
exchanges the components. By composing any 
$\phi \in \Lie{Aff}(X_2,\omega_2)$ with $\rho_2$ or $\psi_2$, if 
necessary, we may assume that $\phi$ is orientation-preserving 
and also preserves the components of $X_2$. The 
orientation-preserving affine group of a torus with a marked point 
is $\Lie{SL}_2(\Z)$; as was the case in Remark~\ref{R:veech}, the 
derivative homomorphism is an isomorphism. Thus, to compute the 
remainder of $\Lie{Aff}(X_2,\omega_2)$, we wish to find the 
intersection of the affine groups of the two components. Set 
\[
M_1 = \begin{pmatrix} 1 & -\alpha \\ \alpha & 1 \end{pmatrix}
\qquad\text{and}\qquad
M_2 = \begin{pmatrix} \alpha & -1 \\ 1 & \alpha \end{pmatrix}.
\]
Following a certain normalization, the two components of $X_2$ 
have the columns of $M_1$ and $M_2$ for their respective homology 
bases. Then we want to determine 
\[
(M_1 \cdot \Lie{SL}_2(\Z) \cdot M_1\inv) \cap 
	(M_2 \cdot \Lie{SL}_2(\Z) \cdot M_2\inv)
\]
or, equivalently, 
$(M_2\inv M_1 \cdot \Lie{SL}_2(\Z) \cdot M_1\inv M_2) 
\cap \Lie{SL}_2(\Z)$. We have 
\[
M_1\inv M_2 = (M_2\inv M_1)^\top = 
	\frac{\alpha}{2-\alpha} 
	\begin{pmatrix}
	2 & -1 \\ 1 & 2
	\end{pmatrix}
\]
and so we want to find the quadruples of integers $(X,Y,Z,W)$ with 
$XW - YZ = 1$ such that the following is in $\Lie{SL}_2(\Z)$:
\[
M_2\inv M_1 
  \begin{pmatrix} X & Y \\ Z & W \end{pmatrix} 
  M_1\inv M_2 
= \frac{1}{5} 
	\begin{pmatrix}
	4X + 2(Y + Z) + W & 4Y + 2(W - X) - Z \\
	4Z + 2(W - X) - Y & 4W - 2(Y + Z) + X
	\end{pmatrix}.
\]
That is, each of the entries in the final matrix must be 
congruent to $0$ modulo $5$. This is a necessary and sufficient 
condition. All four entries yield the same linear condition 
$X + 3Y + 3Z + 4W \equiv 0$ mod $5$, which is satisfied in 
particular if $X \equiv W \equiv 1$ and $Y \equiv Z \equiv 0$ 
mod $5$. Hence the Veech group of $(X_2,\omega_2)$ contains a copy 
of the principle $5$-congruence subgroup of $\Lie{SL}_2(\Z)$; 
it is therefore a lattice in $\Lie{SL}_2(\R)$.

%%%%%%%%%%%%%%%%%%%%%%%%%
%% Bibliographic files %%
%%%%%%%%%%%%%%%%%%%%%%%%%

\bibliographystyle{math}
\bibliography{ay}

\end{document}